\newtheorem{thm}{Theorem}[section]
\newtheorem{prop}[thm]{Proposition}
\newtheorem{lem}[thm]{Lemma}
\newtheorem{cor}[thm]{Corollary}
\theoremstyle{remark}
\newtheorem{rem}[thm]{Remark}
\newcommand{\FF}{\mathbb{F}}
\newcommand{\ZZ}{\mathbb{Z}}
\newcommand{\RR}{\mathbb{R}}
\newcommand{\cC}{\mathcal{C}}
\newcommand{\cD}{\mathcal{D}}
\newcommand{\allone}{\mathbf{1}}
\DeclareMathOperator{\Aut}{Aut}
\DeclareMathOperator{\wt}{wt}
\DeclareMathOperator{\supp}{supp}
\begin{document}
\title{Some Extremal Self-Dual Codes and Unimodular Lattices
in Dimension $40$\footnote{This work was supported by JST PRESTO program.}}

\author{
Stefka Bouyuklieva\thanks{Faculty of Mathematics and Informatics,
Veliko Tarnovo University, 5000 Veliko Tarnovo, Bulgaria},
Iliya Bouyukliev\thanks{Institute of Mathematics and Informatics,
Bulgarian Academy of Sciences, P.O. Box 323, Veliko Tarnovo,
Bulgaria}
and
Masaaki Harada\thanks{Department of Mathematical Sciences,
Yamagata University,
Yamagata 990--8560, Japan, and
PRESTO, Japan Science and Technology Agency (JST), Kawaguchi,
Saitama 332--0012, Japan.
email: mharada@sci.kj.yamagata-u.ac.jp}
}

\maketitle

\begin{abstract}
In this paper, binary extremal singly even self-dual codes of length $40$
and extremal odd unimodular lattices in dimension $40$ are studied.
We give a classification of extremal singly even
self-dual codes of length $40$.
We also give a classification of
extremal odd unimodular lattices
in dimension $40$ with shadows having $80$ vectors of norm $2$
through their relationships with
extremal doubly even self-dual codes of length $40$.
\end{abstract}

\section{Introduction}

Self-dual codes and unimodular lattices are studied  from several
viewpoints (see~\cite{SPLAG} for an extensive bibliography). Many
relationships between self-dual codes and unimodular lattices are
known and therefore these two objects have similar properties.
In this paper, binary
singly even self-dual codes of length $40$ and odd unimodular
lattices in dimension $40$ are studied.
Both binary self-dual codes and unimodular lattices
are divided into two classes, namely,
doubly even self-dual codes and singly even self-dual codes, and
even unimodular lattices and odd unimodular lattices.
In addition,
a binary doubly even self-dual code of length $n$
(as well as an even unimodular lattice in dimension $n)$
exists if and only if $n \equiv 0 \pmod 8$.
This motivates to study binary self-dual codes
of length $n$ and  unimodular lattices in dimension $n$
when $n \equiv 0 \pmod 8$.

It is a fundamental problem to classify self-dual codes and
unimodular lattices. Much work has been done towards classifying
binary self-dual codes and unimodular lattices for modest lengths
and dimensions (see~\cite{SPLAG}). 
A big progress has been realized recently in the classification of
binary self-dual codes with different parameters. Five papers
related to this subject were presented in 2010 and 2011. All
self-dual codes of length 36 were classified in \cite{HM36}. The
extremal codes among them, namely the self-dual $[36,18,8]$ codes,
had been previously classified in \cite{n=36}, so the result was
approved in \cite{HM36}. The classification of the extremal
self-dual $[38,19,8]$ codes was done by three research groups
independently. In \cite{AGKSS38} the authors proved that there are
exactly 2744 extremal self-dual $[38, 19, 8]$ codes, two
$s$-extremal self-dual $[38, 19, 6]$ codes, and 1730 $s$-extremal
self-dual $[38,19,8]$ codes, up to equivalence.
Recently, a classification of doubly
even self-dual codes of length $40$ has been done in~\cite{BHM40},
namely there are $94343$  doubly even self-dual codes, $16470$ of
which are extremal, up to equivalence. Using these codes, the
authors classified the extremal self-dual $[38,19,8]$ codes. In
\cite{BB38}, an efficient algorithm for classification of binary
self-dual codes was described  and then applied to length 38. In
this way all self-dual codes of length 38 were classified.

The main aim of this paper is to give a classification of binary
extremal singly even self-dual codes of length $40$. Some part of
the classification is done by considering a relationship with
binary doubly even self-dual codes of the same length. In
addition, the classification is completed using the algorithm
presented in \cite{BB38}. A partial classification of extremal odd
unimodular lattices in dimension $40$ is also established from the
classification of binary extremal doubly even self-dual codes of
length $40$.

This paper is organized as follows. In Section~\ref{Sec:Def}, we
give definitions and some basic properties of the self-dual codes
and unimodular lattices used in this paper.
In Section~\ref{Sec:C}, we study the binary extremal singly even
self-dual codes of length $40$. The number of vectors of weight
$4$ in the shadow of a binary extremal singly even self-dual code
of length $40$ is at most $10$ \cite{C-S}. We demonstrate that
there are $19$ inequivalent binary extremal singly even self-dual
codes with shadows having $10$ vectors of weight $4$. This
classification is done by considering a relationship with binary
doubly even self-dual codes of length $40$ such that the supports
of all codewords of weight $4$ are $T$-decompositions. We mention
a relationship between such binary  doubly even self-dual codes
and even self-dual additive $\FF_4$-codes. By a method similar to
the above, our classification is also extended to all cases that
shadows have minimum weight $4$.
The classification of binary extremal (singly even) self-dual codes
of length $40$ is also completed by an approach which does
not depend on the weight enumerators. As a consequence,
we have a classification of binary extremal singly even self-dual codes
with shadows of minimum weight $8$, which is the remaining case.
More precisely,
combined with the above classification,
we demonstrate that there are $10200655$
inequivalent binary
extremal singly even self-dual codes of length $40$.
In Section~\ref{Sec:L}, for extremal odd unimodular lattices,
we consider a situation which is
similar to that for binary extremal singly even self-dual codes
with ten vectors of weight $4$ in the shadows, given
in Section~\ref{Sec:C}.
The number of vectors of norm $2$
in the shadow of an extremal odd unimodular lattice in dimension
$40$ is at most $80$.
It is shown that
there are $16470$ non-isomorphic extremal odd unimodular
lattices in dimension $40$
with shadows having $80$ vectors of norm $2$.
This classification is done by considering a relationship with binary
extremal doubly even self-dual codes of length $40$.
Finally, in Section~\ref{Sec:Z4},
we investigate theta series for which
there is an extremal odd unimodular lattice in dimension $40$.
The existence of a number of
extremal self-dual $\ZZ_4$-codes of length $40$ is also given.

Generator matrices of all inequivalent binary extremal singly even
self-dual codes of length $40$ can be obtained electronically
from~\cite{Data}. Most of the computer calculations in this paper
were done by {\sc Magma}~\cite{Magma}. In Section
\ref{Subsection:0}, the package \textsc{Self-dual-bin} by the
second author is also used, where it's main algorithm implemented
in the program \textsc{Gen-self-dual-bin} is described in
\cite{BB38}.

\section{Preliminaries}
\label{Sec:Def}

\subsection{Binary self-dual codes}
Let $\FF_q$ denote the finite field of order $q$, where $q$ is a
prime power.
Any subset of $\FF_q^n$ is called an $\FF_q$-code of length $n$.
An $\FF_q$-code is linear if it is a linear subspace of $\FF_q^n$.
An $\FF_2$-code is called {\em binary} and
all codes in this paper mean binary linear codes unless otherwise noted.

A code is called {\em doubly even} if all codewords have weight
$\equiv 0 \pmod 4$.
The \textit{dual code} $C^{\perp}$ of a code
$C$ of length $n$ is defined as
$
C^{\perp}=
\{x \in \FF_2^n \mid x \cdot y = 0 \text{ for all } y \in C\},
$
where $x \cdot y$ is the standard inner product.
A code $C$ is called
\textit{self-dual} if $C = C^{\perp}$.
A self-dual code which is not doubly even
is called
{\em singly even}.
A doubly even self-dual code of length $n$ exists if and
only if $n \equiv 0 \pmod 8$, while a singly even self-dual code
of length $n$ exists if and
only if $n$ is even.

Let $C$ be a singly even self-dual code and let $C_0$ denote the
subcode of codewords having weight $\equiv0\pmod4$. Then $C_0$ is
a subcode of codimension $1$. The {\em shadow} $S$ of $C$ is
defined to be $C_0^\perp \setminus C$. There are cosets
$C_1,C_2,C_3$ of $C_0$ such that $C_0^\perp = C_0 \cup C_1 \cup
C_2 \cup C_3 $, where $C = C_0  \cup C_2$ and $S = C_1 \cup C_3$.
Shadows were introduced by Conway and Sloane~\cite{C-S}, in order
to
provide restrictions on the weight enumerators of singly even
self-dual codes. Two self-dual codes $C$ and $C'$ of length $n$
are said to be {\em neighbors} if $\dim(C \cap C')=n/2-1$. Since
every self-dual code $C$ of length $n$ contains the all-one vector
$\allone$, $C$ has $2^{n/2-1}-1$ subcodes $D$ of codimension $1$
containing $\allone$. Since $\dim(D^\perp/D)=2$, there are two
self-dual codes rather than $C$ lying between $D^\perp$ and $D$.
When $C$ is doubly even, one of them is doubly even and the other
is singly even for each subcode $D$. If $C$ is a singly even
self-dual code of length divisible by $8$, then $C$ has two doubly
even self-dual neighbors, namely $C_0 \cup C_1$ and $C_0 \cup C_3$
(see~\cite{BP}).


The minimum weight $d(C)$ of a self-dual code $C$ of length
$n$ is bounded by $d(C)\le 4 \lfloor n/24 \rfloor +4$ unless $n
\equiv 22 \pmod{24}$ when $d(C) \le 4 \lfloor n/24 \rfloor+6$~\cite{Rains}.
We say that a self-dual
code meeting the upper bound is {\em extremal}.

Two codes $C$ and $C'$ are {\em equivalent}, denoted $C
\cong C'$,
if one can be obtained from the other by permuting the
coordinates. An {\em automorphism} of $C$ is a
permutation of the coordinates which preserves the code. The set
consisting of all automorphisms of $C$ forms a group called the
\emph{automorphism group} of this code and it is denoted by
$\Aut(C)$. The number of the codes equivalent to $C$ is
$n!/\#\Aut(C)$, where $n$ is the length of $C$. If we consider
the action of the symmetric group $S_n$ on the set $\Omega_n$ of
all self-dual codes of length $n$, then two codes from this set are
equivalent if they belong to the same orbit. This action induces
an equivalence relation in $\Omega_n$ and the equivalence classes
are the orbits with respect to the action of $S_n$. To classify
the self-dual codes of length $n$ means to find exactly one
representative of each equivalence class.

%

\subsection{Self-dual additive $\FF_4$-codes and $\ZZ_4$-codes}

In Section \ref{Subsec:C}, we also consider additive $\FF_4$-codes
where $\FF_4=\{ 0,1, \omega, \omega^2\}$. Such a code is a
$k$-dimensional $\FF_2$-subspace of $\FF_4^n$ and so has $2^k$
codewords.
An additive $\FF_4$-code is even if all its codewords have even
Hamming weights.
Two additive $\FF_4$-codes $\cC_1$ and $\cC_2$ are equivalent
if there is a map from $S_3^n\times S_n$ sending $\cC_1$ onto $\cC_2$,
where $S_n$ acts on the set of the $n$ coordinates and $S_3$
permutes the elements $1, \omega, \omega^2$ of the field. The
automorphism group of $\cC$, denoted by $\Aut(\cC)$, consists of all
elements of $S_3^n\times S_n$ which preserve the code.

An additive $\FF_4$-code $\cC$ is called self-dual if
$\cC = {\cC}^*$, where
the dual code $\cC^*$ of $\cC$ is defined as
$\{x \in \FF_4^n \mid x * y = 0 \text{ for all } y \in \cC\}$
under
$$x * y=\sum_{i=1}^n (x_iy_i^2+x_i^2y_i) \ \
\mbox{for} \ \ x=(x_1,\ldots,x_n), y=(y_1,\ldots,y_n) \in
\FF_4^n.$$
Even self-dual additive $\FF_4$-codes exist only in even lengths.
The minimum Hamming weight $d(\cC)$ of an even self-dual additive
$\FF_4$-code $\cC$ of length $n$ is bounded by $d(\cC)\le 2
\lfloor n/6 \rfloor +2$ \cite[Theorem 33]{RS-Handbook}. We say
that an even self-dual additive $\FF_4$-code meeting the upper
bound is {extremal}.

The last family of self-dual codes which we consider is the set of
self-dual $\ZZ_4$-codes, where
$\ZZ_4$ denotes the ring of integers modulo $4$.
The self-dual $\ZZ_4$-codes are
connected with unimodular lattices of a special form \cite{Z4-BSBM}.
A $\ZZ_{4}$-code $\cC$ of length $n$
is a $\ZZ_{4}$-submodule of $\ZZ_{4}^n$.
A $\ZZ_4$-code $\cC$ is {self-dual} if $\cC={\cC}^\perp$,
where the {dual} code ${\cC}^\perp$ of $\cC$ is defined as
$\{ x \in \ZZ_{4}^n \mid  x \cdot y = 0$ for all $y \in \cC\}$
under the standard inner product $x \cdot y$.
We say that two $\ZZ_4$-codes are {equivalent}
if one can be obtained from the other by permuting the
coordinates and (if necessary) changing the signs of certain
coordinates.

The {Euclidean weight} of a codeword $x=(x_1,\ldots,x_n)$ of $\cC$ is
$n_1(x)+4n_2(x)+n_3(x)$, where $n_{\alpha}(x)$ denotes
the number of components $i$ with $x_i=\alpha$ $(\alpha=1,2,3)$.
A $\ZZ_4$-code $\cC$ is {Type~II}
if $\cC$ is self-dual and
the Euclidean weights of all codewords of $\cC$ are divisible
by 8~\cite{Z4-BSBM,Z4-HSG}.
A self-dual code which is not Type~II is called {Type~I}.
A Type~II $\ZZ_4$-code of length $n$ exists if and
only if $n \equiv 0 \pmod 8$,
while a Type~I $\ZZ_4$-code exists for every length.
The minimum Euclidean
weight $d_E(\cC)$ of a Type~II $\ZZ_4$-code  $\cC$
of length $n$ is bounded by
$d_E(\cC) \le 8 \lfloor n/24 \rfloor +8$~\cite{Z4-BSBM}.
It was also shown in~\cite{RS-bound} that
the minimum Euclidean
weight $d_E(\cC)$ of a Type~I code $\cC$
of length $n$ is bounded by
$d_E(\cC) \le 8 \lfloor n/24 \rfloor +8$
if $n \not\equiv 23 \pmod{24}$, and
$d_E(\cC) \le 8 \lfloor n/24 \rfloor +12$
if $n \equiv 23 \pmod{24}$.
A self-dual $\ZZ_4$-code meeting the upper bound is called
{extremal}.




\subsection{Unimodular lattices}
A (Euclidean) lattice $L \subset \RR^n$
in dimension $n$
is {\em integral} if
$L \subset L^{*}$, where
the dual lattice $L^{*}$ of $L$ is defined as
$\{ x \in {\RR}^n \mid (x,y) \in \ZZ \text{ for all }
y \in L\}$ under the standard inner product $(x,y)$.
An integral lattice is called {\em even}
if the norm $(x,x)$ of every vector $x$ is even.
A lattice $L =L^*$ is called {\em unimodular}.
A unimodular lattice which is not even is called
{\em odd}.
An even unimodular lattice in dimension $n$ exists if and
only if $n \equiv 0 \pmod 8$, while
an odd  unimodular lattice exists for every dimension.
Two lattices $L$ and $L'$ are {\em isomorphic}, denoted $L \cong L'$,
if there exists an orthogonal matrix $A$ with
$L' = L \cdot A$.

Rains and Sloane~\cite{RS-bound} showed that
the minimum norm $\min(L)$ of a unimodular
lattice $L$ in dimension $n$
is bounded by
$\min(L) \le 2 \lfloor n/24 \rfloor+2$
unless $n=23$ when $\min(L) \le 3$.
We say that a unimodular lattice meeting the upper
bound is {\em extremal}.

Let $L$ be an odd unimodular lattice and let $L_0$ denote its
sublattice of vectors of even norms.
Then $L_0$ is a sublattice of $L$ of index $2$~\cite{CS-odd}.
The {\em shadow} $S$ of $L$ is defined to be $L_0^* \setminus L$.
There are cosets $L_1,L_2,L_3$ of $L_0$ such that
$L_0^* = L_0 \cup L_1 \cup L_2 \cup L_3$, where
$L = L_0  \cup L_2$ and $S = L_1 \cup L_3$.
Shadows for odd unimodular lattices appeared in~\cite{CS-odd}
and also in~\cite[p.~440]{SPLAG},
in order to
provide
restrictions on the theta series of odd unimodular lattices.
Two lattices $L$ and $L'$ are {\em neighbors} if
both lattices contain a sublattice of index $2$
in common.
If $L$ is an odd unimodular lattice in dimension divisible by
$8$, then $L$ has two even unimodular neighbors
of $L$, namely, $L_0 \cup L_1$ and $L_0 \cup L_3$.

\section{Singly even self-dual codes of length 40}
\label{Sec:C}
In this section, we give a classification of extremal
singly even self-dual codes of length $40$.
For the case that the shadows have minimum weight $4$,
the classification was done in two different ways.

\subsection{Weight enumerators}
An extremal singly even self-dual code $C$ of
length $40$ and its shadow $S$
have the following weight enumerators:
\[
\left\{\begin{array}{cl}
W_{40,C,\beta}=&1
+ (125 + 16 \beta) y^8
+ (1664  - 64 \beta) y^{10}
+ (10720 + 32 \beta) y^{12} + \cdots,
\\
W_{40,S,\beta}=&\beta  y^4
+ (320  - 8 \beta)  y^8
+ (21120  +  28 \beta) y^{12} + \cdots,
\end{array}\right.
\]
respectively,
where $\beta$ is an integer with $0 \le \beta \le 10$~\cite{C-S}.
It was shown in~\cite{HM06} that an extremal singly even self-dual code
with weight enumerator $W_{40,C,\beta}$ exists if and only if
$\beta=0,1,\ldots,8,10$.

\begin{lem}\label{lem:1}
Let $C$ be an
extremal singly even self-dual code of length $40$
with weight enumerator $W_{40,C,\beta}$.
Then one of $C_0 \cup C_1$ and $C_0 \cup C_3$ is an
extremal doubly even self-dual code of length $40$
and the remaining one is a
doubly even self-dual code of length $40$ containing
$\beta$ codewords of weight $4$.
\end{lem}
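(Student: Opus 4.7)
The plan is to leverage the neighbor discussion in Section~\ref{Sec:Def}, which states that the two doubly even self-dual neighbors of a singly even self-dual code of length $n\equiv 0\pmod 8$ are precisely $D_1:=C_0\cup C_1$ and $D_3:=C_0\cup C_3$. First I would observe that $C_0\subset C$ inherits minimum weight at least $8$ from the extremality of $C$, so no codeword of $C_0$ has weight $4$; thus for $i\in\{1,3\}$ the weight-$4$ codewords of $D_i$ are exactly the weight-$4$ vectors of the coset $C_i$. Reading the coefficient of $y^4$ in the shadow weight enumerator $W_{40,S,\beta}$ then yields $A_4(C_1)+A_4(C_3)=\beta$, so the lemma reduces to showing that these $\beta$ weight-$4$ shadow vectors cannot be split between the two cosets, i.e.\ $A_4(C_1)\cdot A_4(C_3)=0$.

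The main, and really the only non-routine, step is this last claim. My plan is a proof by contradiction: assume $v\in C_1$ and $w\in C_3$ both have weight $4$. The group structure $C_0^\perp/C_0\cong(\ZZ/2\ZZ)^2$ forces $v+w$ to lie in the remaining nontrivial coset $C_2\subset C$. Extremality of $C$ gives $\wt(v+w)\ge 8$, while the triangle inequality gives $\wt(v+w)\le \wt(v)+\wt(w)=8$, so $\wt(v+w)=8$. But $C$ is singly even with $C_0$ the $\equiv 0\pmod 4$ subcode, hence every vector of $C_2=C\setminus C_0$ has weight $\equiv 2\pmod 4$. Since $8\not\equiv 2\pmod 4$, this is a contradiction.

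To conclude, relabel the two cosets so that $A_4(C_1)=0$ and $A_4(C_3)=\beta$. Then $D_1$ is a doubly even self-dual code of length $40$ of minimum weight at least $8$, which meets the bound $4\lfloor 40/24\rfloor+4=8$ and is therefore extremal; the other neighbor $D_3$ is a doubly even self-dual code of length $40$ containing exactly $\beta$ codewords of weight $4$, as the lemma asserts. I do not anticipate any further obstacles beyond the coset/mod-$4$ step above; everything else is bookkeeping on the cosets in $C_0^\perp/C_0$ together with the extremal weight bound for doubly even self-dual codes of length $40$.
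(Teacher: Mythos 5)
Your proof is correct and follows essentially the same route as the paper: both reduce the lemma to showing that two weight-$4$ shadow vectors cannot lie in the two different cosets $C_1$ and $C_3$, by observing that their sum would land in $C_2$ and violate the weight constraints of $C$. The only (minor) difference is the final contradiction: the paper asserts $\wt(x+y)\le 6$ (implicitly using that vectors of $C_1$ and $C_3$ have odd inner product, hence intersecting supports), whereas you pin $\wt(v+w)=8$ via extremality plus the triangle inequality and then invoke the $\equiv 2\pmod 4$ weight condition on $C_2$ — an equally valid and slightly more self-contained variant of the same step.
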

\begin{proof}
The codes $C_0 \cup C_1$ and $C_0 \cup C_3$ are doubly even
self-dual codes~\cite{BP}.
The lemma is trivial in the cases
$\beta=0$  and $\beta=1$. Suppose that $S$ has at least two
vectors of weight $4$. Let $x,y$ be vectors of weight $4$ in $S$
with $x \ne y$. If $x \in C_1$ and $y \in C_3$, then $x+y \in C_2$
and $\wt(x+y) \le 6$, which contradicts the minimum weight of
$C_2$. Hence, we may assume without loss of generality that all
vectors of weight $4$ in $S$ are contained in $C_1$. Then $C_0
\cup C_3$ is extremal and $C_0 \cup C_1$ has $\beta$ codewords of
weight $4$.
\end{proof}


\subsection{Weight enumerator $W_{40,C,10}$,
$T$-decompositions and self-dual additive $\FF_4$-codes}\label{Subsec:C}

Let $C$ be a code of length $n \equiv 0 \pmod 4$.
A partition $\{T_1,T_2,\ldots,T_{\frac{n}{4}}\}$ of
$\{1,2,\ldots,n\}$
is called a {\em $T$-decomposition}
of $C$ if the following conditions hold:
\begin{align}
\label{eq:T1}
&T_1 \cup T_2 \cup \cdots \cup T_{\frac{n}{4}} = \{1,2,\ldots,n\}, \\
\label{eq:T2}
&\# T_i=4, \\
\label{eq:T3}
&T_i \cup T_j \text{ is the support of a codeword of $C$},
\end{align}
for
$i,j=1,2,\ldots,\frac{n}{4}$ and $i \ne j$~\cite{KKM}.
In particular, when all $T_i$ are the supports of codewords
of $C$, we say that $C$ contains the $T$-decomposition.

In this subsection, we assume that $C$ is an extremal singly even
self-dual code of length $40$ with weight enumerator
$W_{40,C,10}$. By Lemma~\ref{lem:1}, we may suppose without loss
of generality that all ten vectors of weight $4$ in $S$ are
contained in $C_1$.

\begin{lem}\label{lem:S4}
There is a $T$-decomposition for the self-dual codes $C$, $C_0
\cup C_1$ and $C_0 \cup C_3$. In particular, $C_0 \cup C_1$
contains a $T$-decomposition.
\end{lem}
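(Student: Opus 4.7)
The plan is to explicitly construct the $T$-decomposition from the $10$ weight-$4$ vectors in the shadow. By the standing assumption, all $10$ such vectors sit in the coset $C_1$; call them $x_1,\ldots,x_{10}$ and set $T_i = \supp(x_i)$. Every $T_i$ has size $4$, so to verify conditions \eqref{eq:T1} and \eqref{eq:T2} it suffices to show the $T_i$ are pairwise disjoint, and then $\#(T_1\cup\cdots\cup T_{10}) = 40$ forces them to partition $\{1,\ldots,40\}$.

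Disjointness is the only calculation. For $i\ne j$, the sum $x_i+x_j$ lies in $C_0$ because $C_1$ is a coset of $C_0$. Since $C_0 \subset C$ and $C$ is extremal of minimum weight $8$, we have $\wt(x_i+x_j)\ge 8$. But
\[
\wt(x_i+x_j) = \wt(x_i) + \wt(x_j) - 2\,\#(T_i\cap T_j) = 8 - 2\,\#(T_i\cap T_j),
\]
so $\#(T_i\cap T_j)=0$, and $\{T_1,\ldots,T_{10}\}$ is a partition of $\{1,\ldots,40\}$.

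Condition \eqref{eq:T3} now comes for free from the very same computation: for any $i\ne j$, the vector $x_i+x_j\in C_0$ has support $T_i\triangle T_j = T_i\cup T_j$. Since $C_0$ is contained in each of $C$, $C_0\cup C_1$ and $C_0\cup C_3$, the set $T_i\cup T_j$ is the support of a codeword in all three codes, establishing that $\{T_1,\ldots,T_{10}\}$ is a $T$-decomposition for each of them. Finally, for $C_0\cup C_1$ we have the stronger statement: each $T_i$ is itself the support of $x_i\in C_1\subset C_0\cup C_1$, so $C_0\cup C_1$ \emph{contains} the $T$-decomposition (whereas $C$ and $C_0\cup C_3$, both of minimum weight $8$, cannot).

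There is no real obstacle here; the argument is essentially the minimum-distance bound applied to pairs of weight-$4$ shadow vectors, a standard trick for exploiting the doubly-even neighbor obtained from Lemma~\ref{lem:1}. The only thing to be careful about is to use that $C_0$ (not $C_0\cup C_1$) has minimum weight $\ge 8$, which follows since $C_0\subset C$.
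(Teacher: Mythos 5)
Your proof is correct and is essentially the paper's argument: the paper simply asserts that the supports of the ten weight-$4$ shadow vectors are pairwise disjoint and concludes, while you supply the (intended) one-line justification via $x_i+x_j\in C_0\subset C$ and the minimum weight $8$ of $C$, plus the explicit check of condition (\ref{eq:T3}) and of the ``contains'' statement for $C_0\cup C_1$. No gaps; this matches the paper's proof, just written out in more detail.
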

\begin{proof}
Denote by $S_4$ the set
of the ten vectors of weight $4$ in the shadow $S$ of $C$. The
supports of any two vectors of $S_4$ are disjoint. Thus,
$\{\supp(x) \mid x \in S_4\}$ is a $T$-decomposition for the codes
$C$, $C_0 \cup C_1$ and $C_0 \cup C_3$.
\end{proof}


By Lemmas~\ref{lem:1} and \ref{lem:S4}, every extremal singly even
self-dual code $C$ of length $40$ with weight enumerator
$W_{40,C,10}$ is constructed as a neighbor of a doubly even
self-dual code  $D$
satisfying the following condition
\begin{align}
\text{the supports of all codewords of weight $4$ are a
$T$-decomposition}. \label{eq:Tdec}
\end{align}
It follows that $\dim(C \cap D)=19$ and $\allone
\in C \cap D$. Since $\dim(C_0^\perp/C_0)=2$, there are two doubly
even self-dual codes lying between $C_0^\perp$ and $C_0$. In this
case, one doubly even self-dual code $C_0 \cup C_1$ has minimum
weight $4$ and the other one $C_0 \cup C_3$ has minimum weight
$8$. Hence, $C_0$ is contained in a unique doubly even self-dual
code satisfying the condition (\ref{eq:Tdec}). This means that
extremal singly even self-dual codes, constructed in this way from
inequivalent doubly even self-dual codes satisfying
(\ref{eq:Tdec}), are inequivalent. In other words, it is
sufficient to check equivalences only for the subcodes $D$ with
$\dim(D)=19$, $d(D)=8$ and $\allone \in D$ contained in each
doubly even self-dual code satisfying (\ref{eq:Tdec}), in order to
complete our classification. This observation
substantially reduces the necessary calculations for
equivalence tests in our classification.

We start the classification of extremal singly even self-dual
codes with weight enumerator $W_{40,C,10}$ by investigating the
doubly even $[40,20,4]$ codes. There are $1093$ inequivalent
doubly even self-dual codes with $A_4=10$, where $A_i$ denotes the
number of codewords of weight $i$~\cite{BHM40}. We verified that
$19$ codes of them satisfy the condition (\ref{eq:Tdec}).
Note that every doubly even self-dual code of length $40$ contains
$2^{19}-1$ subcodes of codimension $1$ containing $\allone$.
For each of the $19$ codes, we verified that
all such subcodes are equivalent.
The calculations took about 65 minutes
using a single core of a PC Intel i7 6 core processor.
By the above observation in the previous paragraph,
we have the following:

\begin{prop}\label{prop:b10}
There are $19$ inequivalent extremal
singly even self-dual codes of length $40$ with weight enumerator
$W_{40,C,10}$.
\end{prop}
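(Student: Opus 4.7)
The plan is to translate the classification into one of doubly even self-dual codes of length $40$ satisfying condition~(\ref{eq:Tdec}), building on Lemmas~\ref{lem:1} and~\ref{lem:S4} and on the classification of doubly even self-dual codes of length $40$ given in~\cite{BHM40}.

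First, starting from an extremal singly even self-dual code $C$ with weight enumerator $W_{40,C,10}$, I apply Lemma~\ref{lem:1} and, after possibly swapping the labels $C_1 \leftrightarrow C_3$, assume all ten weight-$4$ vectors of the shadow $S$ lie in $C_1$. Then $D := C_0 \cup C_1$ is a doubly even self-dual code with $A_4 = 10$, and by Lemma~\ref{lem:S4} its ten weight-$4$ codewords have disjoint supports forming a $T$-decomposition, so $D$ satisfies~(\ref{eq:Tdec}). The list in~\cite{BHM40} contains the $1093$ inequivalent doubly even self-dual codes of length $40$ with $A_4 = 10$; I would sieve this list by~(\ref{eq:Tdec}) and expect exactly $19$ survivors.

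Injectivity of the assignment $C \mapsto D$ on equivalence classes follows from a uniqueness observation: between $C_0^{\perp}$ and $C_0$ lie exactly two doubly even self-dual codes, namely $C_0 \cup C_1 = D$ and $C_0 \cup C_3$. By Lemma~\ref{lem:1} one has minimum weight $4$ and the other is extremal with minimum weight $8$, so $D$ is intrinsically recovered from $C_0$ as the non-extremal one. Consequently, any equivalence $C \cong C'$ descends to $D \cong D'$.

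For surjectivity and uniqueness within each fiber I would run the reverse construction. For each of the $19$ codes $D$, pick a codimension-$1$ subcode $D_{\mathrm{sub}}$ of $D$ with $\allone \in D_{\mathrm{sub}}$; between $D_{\mathrm{sub}}^{\perp}$ and $D_{\mathrm{sub}}$ there is exactly one singly even self-dual code, giving a candidate $C$. Each $D$ admits $2^{19} - 1$ such subcodes, so the main obstacle is showing that all of them yield mutually equivalent codes $C$. Rather than testing equivalence of the $C$s pairwise, I would let $\Aut(D)$ act on the set of codimension-$1$ subcodes containing $\allone$, compute its orbits in \textsc{Magma}, and verify a single orbit in each of the $19$ cases; since any $\sigma \in \Aut(D)$ mapping one subcode to another transports the corresponding singly even neighbor to an equivalent one, transitivity forces all resulting $C$s to be equivalent. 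Combined with the injectivity above, this yields exactly $19$ equivalence classes.
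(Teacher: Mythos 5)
Your overall route coincides with the paper's: use Lemmas~\ref{lem:1} and~\ref{lem:S4} to pass to doubly even self-dual codes satisfying (\ref{eq:Tdec}), sieve the $1093$ codes with $A_4=10$ down to $19$, and then analyse the singly even neighbors of each such code $D$. The gap is in that last step. You take \emph{all} $2^{19}-1$ codimension-$1$ subcodes of $D$ containing $\allone$, assert that "all of them yield mutually equivalent codes $C$", and plan to certify this by checking that $\Aut(D)$ has a single orbit on this set. Both the assertion and the check fail. Any subcode $D_{\mathrm{sub}}$ containing one of the ten weight-$4$ codewords of $D$ (such subcodes exist, since those codewords span only a $10$-dimensional space) has its singly even neighbor of minimum weight $4$, which is not extremal and certainly not equivalent to the codes being counted. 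Accordingly $\Aut(D)$ cannot be transitive: no automorphism carries a subcode of minimum weight $4$ to one of minimum weight $8$; alternatively, a single orbit would force the prime $2^{19}-1=524287>40$ to divide $\#\Aut(D)$, which is impossible for a subgroup of $S_{40}$. So your proposed computation would report several orbits and the argument stalls; likewise, in your "reverse construction" an arbitrarily chosen subcode need not produce an extremal candidate at all.

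The repair is exactly the paper's restriction: only subcodes with $\dim=19$, $\allone\in D_{\mathrm{sub}}$ and $d(D_{\mathrm{sub}})=8$ (equivalently, avoiding all ten weight-$4$ codewords) can occur as $C_0$ of an extremal code with $W_{40,C,10}$, and it suffices to verify, for each of the $19$ codes, that all these subcodes are equivalent: an equivalence $C_0\to C_0'$ carries the unique singly even self-dual code between $C_0$ and $C_0^{\perp}$ to the one between $C_0'$ and ${C_0'}^{\perp}$, so $C\cong C'$. On this restricted set your orbit idea is in fact legitimate (any equivalence between two such subcodes preserves $D$, the unique minimum-weight-$4$ doubly even code above them, so "all equivalent" and "one $\Aut(D)$-orbit" agree), and existence within each fiber also goes through: the ten disjoint tetrads lie in the shadow of the neighbor, so every codeword of weight $\equiv 2\pmod 4$ meets each tetrad in an odd number of coordinates and hence has weight at least $10$, forcing the neighbor to be extremal with $\beta=10$. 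Finally, note that your "injectivity" paragraph only proves well-definedness of $[C]\mapsto[D]$; the statement actually needed --- two extremal codes with the same $D$ are equivalent --- is precisely what the corrected subcode analysis supplies.
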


\begin{rem}\label{rem:2432}
A similar argument can be used for the classification of extremal
singly even self-dual codes of length $32$. The codes C$67$, C$68$
and C$69$ in~\cite[Table A]{CPS} are the only doubly even
self-dual codes of length $32$
satisfying (\ref{eq:Tdec}).
Note that the shadow of
an extremal singly even self-dual code
has exactly $8$ vectors of  weight $4$
for length $32$.
\end{rem}

Now we give some properties of the extremal singly even self-dual
codes of length $40$ with weight enumerator $W_{40,C,10}$. The
{\em covering radius} of a code $C$ of length $n$ is the smallest
integer $R$ such that spheres of radius $R$ around codewords of
$C$ cover the space $\FF_2^n$.
The covering radius is a basic and important geometric parameter
of a code. Let $R_{40}$ be the covering radius of an extremal
singly even self-dual code of length $40$. Then, by the
sphere-covering bound  and the Delsarte bound (see~\cite{A-P}), $6
\le R_{40} \le 14$.
In Table~\ref{Tab:AutCR10},
we list
the number $N(\#\Aut,R)$ of extremal singly even self-dual codes
with automorphism groups of order $\#\Aut$
and covering radii $R$ for $\beta=10$.

\begin{table}[thb]
\caption{$(\#\Aut,R,N(\#\Aut,R))$ for $\beta=10$}
\label{Tab:AutCR10}
\begin{center}
{\footnotesize
\begin{tabular}{ccccc}
\noalign{\hrule height0.8pt}
\multicolumn{5}{c}{$(\#\Aut,R,N(\#\Aut,R))$}\\
\hline
$(12288, 8, 1)$ &
$(16384, 8, 1)$ &
$(18432, 8, 1)$ &
$(20480, 8, 1)$ &
$(32768, 8, 1)$ \\
$(49152, 8, 3)$ &
$(65536, 8, 1)$ &
$(147456, 8, 1)$ &
$(245760, 8, 1)$ &
$(262144, 8, 1)$ \\
$(327680, 8, 1)$ &
$(737280, 8, 1)$ &
$(786432, 8, 1)$ &
$(1179648, 8, 1)$ &
$(1474560, 8, 1)$ \\
$(11796480, 8, 1)$ &
$(44236800, 7, 1)$ & \\
\noalign{\hrule height0.8pt}
  \end{tabular}
}
\end{center}
\end{table}
From Table~\ref{Tab:AutCR10}, for $\beta=10$,
there is a unique extremal singly even self-dual code with
covering radius $7$.
This code must be equivalent to the extremal
singly even self-dual code with covering radius $7$
given in~\cite[Section 3]{HO}.
The coset weight distribution of
some extremal singly even self-dual code of length $40$
with covering radius $7$
was given in~\cite[Table IX]{Ozeki-S},
noting that the code was listed without giving its construction.
It is claimed in~\cite[Table IX]{Ozeki-S} that
there are 40000 cosets with weight enumerator
$4 y^6 + 152 y^8 + 1644 y^{10}  +  10608 y^{12} + \cdots$.
However,
we verified that this is incorrect and there
are only 14400 cosets with the weight enumerator
and there are 25600 cosets with the following
weight enumerator
\begin{multline*}
 4 y^{6}
+ 168 y^{8}
+ 1580 y^{10}
+ 10640 y^{12}
+ 44388 y^{14}
\\
+ 119768 y^{16}
+ 216172 y^{18}
+ 263136 y^{20}
+ \cdots + 4 y^{34}.
\end{multline*}

We give a characterization of the set of codewords of minimum
weight in an extremal singly even self-dual code of length $40$
with weight enumerator $W_{40,C,10}$ in the following proposition.
Recall that a $t$-$(v,k,\lambda)$ design is a set $X$ of $v$
points together with a collection of $k$-subsets of $X$ (called
blocks) such that every $t$-subset of $X$ is contained in exactly
$\lambda$ blocks.

\begin{prop}\label{prop:Cmin}
Let $C$ be an extremal singly even self-dual code of length $40$
with weight enumerator $W_{40,C,10}$.
Then the set of the supports of codewords of weight $8$
in $C$ forms a $1$-$(40,8,57)$ design.
\end{prop}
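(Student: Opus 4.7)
The plan is to classify the $A_8 = 125 + 16 \cdot 10 = 285$ weight-$8$ codewords of $C$ by their interaction with the partition of $\{1,\ldots,40\}$ into the ten $4$-element blocks $T_1,\ldots,T_{10}$ (the supports of the weight-$4$ vectors of $S$; by Lemma~\ref{lem:1} we may assume all ten lie in $C_1$, and by Lemma~\ref{lem:S4} they partition $\{1,\ldots,40\}$). Since $8 \cdot 285 = 40 \cdot 57$, the incidence count $a_i := \#\{c \in C : \wt(c)=8,\ i \in \supp(c)\}$ is $57$ on average, so only per-coordinate uniformity must be proved.

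First I would classify intersection patterns. Every weight-$8$ codeword $c$ lies in $C_0$ (since $8 \equiv 0 \pmod 4$), so $c \cdot v_k = 0$ forces $|c \cap T_k| \in \{0,2,4\}$. The compositions of $8$ into ten parts from $\{0,2,4\}$ are $(4,4,0^8)$, $(4,2,2,0^7)$, and $(2,2,2,2,0^6)$. The key reduction is to rule out $(4,2,2,0^7)$: if $T_k \subseteq \supp(c)$ then $c+v_k \in C_0+C_1 = C_1$ has weight $4$, and since the ten $v_l$'s are the only weight-$4$ vectors of $C_1$ by our assumption, $c+v_k = v_l$ for some $l \ne k$, giving $c = v_k+v_l$ of pattern $(4,4,0^8)$. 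Hence every weight-$8$ codeword is either of \emph{Type A}, of the form $c = v_k+v_l$ (giving $\binom{10}{2} = 45$ codewords), or of \emph{Type C} with pattern $(2,2,2,2,0^6)$ (giving $285-45 = 240$ codewords).

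For each $i \in T_k$ the Type A codewords through $i$ are exactly $v_k+v_l$ with $l \ne k$, contributing $9$ to $a_i$, so it remains to show each $i$ lies in exactly $48$ Type C codewords. For $\{i,j\} \subset T_k$, let $n_{ij}$ denote the number of Type C codewords $c$ with $\supp(c) \cap T_k = \{i,j\}$; then the Type C codewords through $i$ number $\sum_{j \in T_k,\, j \ne i} n_{ij}$. Double counting pairs (block, Type C codeword) with $|c \cap T_k| = 2$ yields $\sum_k \sum_{\{i,j\} \subset T_k} n_{ij} = 4 \cdot 240 = 960$, so on average $\sum_{\{i,j\} \subset T_k} n_{ij} = 96$ per block and $n_{ij} = 16$ per pair; the task is to promote these averages to equalities.

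The hard part and main obstacle is establishing $n_{ij} = 16$ uniformly (equivalently, each of the six cosets of $\tilde C_k := \{c \in C_0 : c|_{T_k} = 0\}$ in $C_0$ indexed by a weight-$2$ vector of $\pi_k(C_0)$ contains exactly $16$ weight-$8$ codewords). My plan is to exploit the correspondence between the doubly even self-dual neighbor $D = C_0 \cup C_1$ and the even self-dual additive $\FF_4$-code $\cC$ of length $10$ mentioned earlier in this subsection: each weight-$4$ codeword of $\cC$ has exactly $16$ weight-$8$ preimages in $D$, accounting for all Type C codewords of $D$, and these preimages split equally between $C_0$ and $C_1$ via the involution $c \mapsto c + v_k$. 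One checks $d(\cC) = 4 = 2\lfloor 10/6 \rfloor + 2$ (using the absence of Type B in $D$ and the parity of binary weights to rule out weight-$2$ and weight-$3$ codewords of $\cC$), so $\cC$ is extremal. An Assmus--Mattson-style argument for extremal even self-dual additive $\FF_4$-codes of length $10$, or the explicit classification of such codes, then yields a $1$-design structure on weight-$4$ codewords of $\cC$ which, pulled back through the correspondence together with the $\tau_k$ symmetries equating complementary $T_k$-projections, gives the required uniform $n_{ij} = 16$.
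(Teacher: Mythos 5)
Your combinatorial setup is sound as far as it goes: every weight-$8$ codeword of $C$ lies in $C_0$ and meets each $T_k$ evenly, the pattern $(4,2,2,0^7)$ is correctly excluded, the split $285=45+240$ is right, and the bookkeeping through $B(\cC)$ (each weight-$4$ codeword of $\cC$ has $16$ weight-$8$ preimages in $D=C_0\cup C_1$, of which exactly $8$ land in $C$) is correct. The genuine gap is the last step, which is exactly the heart of the proposition: the uniformity is never proved. You defer $n_{ij}=16$ to ``an Assmus--Mattson-style argument for extremal even self-dual additive $\FF_4$-codes of length $10$, or the explicit classification,'' but no such theorem is stated or cited in usable form and the check over the $19$ codes is not carried out, so the proposition has merely been reduced to an equivalent unproved statement about $\cC$. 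Moreover the implication you sketch is imprecise: writing $m_a^{(k)}$ for the number of weight-$4$ codewords $u\in\cC$ with $u_k=a$, one gets $n_{ij}=4\,m_a^{(k)}$ for the nonzero value $a$ attached to the pair $\{i,j\}\subset T_k$ (the $4/4$ split between a weight-$2$ block pattern and its complement does hold, by the parity argument on which of the $16$ preimages lie in $C_0$). Hence $n_{ij}=16$ needs the ``value-uniform'' count $m_a^{(k)}=4$, which a $1$-design on the \emph{supports} of the weight-$4$ codewords of $\cC$ does not give. Fortunately your target is stronger than necessary: the three pairs through a fixed $i\in T_k$ carry the three distinct nonzero values, so each weight-$4$ codeword of $\cC$ meeting coordinate $k$ contributes exactly $4$ Type C words through $i$ and $a_i=9+4\lambda_k$; thus only the support $1$-design property ($\lambda_k=12$) is needed --- but even that is not established in your write-up.

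The paper avoids all of this with a short argument you missed. By Lemma~\ref{lem:1}, $C_0\cup C_3$ is an extremal doubly even self-dual $[40,20,8]$ code, and the classical binary Assmus--Mattson theorem applied to it shows that its weight-$8$ supports form a $1$-$(40,8,57)$ design. Since every weight-$8$ codeword of $C$ lies in $C_0\subset C_0\cup C_3$ and both codes have exactly $285$ codewords of weight $8$, the two codes have the same set of weight-$8$ codewords, so the design property transfers to $C$. If you want to salvage your route, the cleanest fix is to prove, or verify computationally from the classification of the $19$ extremal even self-dual additive $\FF_4$-codes of length $10$, that their weight-$4$ codewords form a $1$-design; but the neighbor argument is both shorter and independent of the $T$-decomposition machinery.
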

\begin{proof}
By Lemma~\ref{lem:1}, $C_0 \cup C_3$ is an
extremal doubly even self-dual code.
By the
Assmus--Mattson theorem (see~\cite[Section~9]{RS-Handbook}),
the set of the supports of codewords of weight $8$ in
an extremal doubly even self-dual code of length $40$
forms a $1$-$(40,8,57)$ design.
Since the numbers of codewords of weight $8$ in
$C$ and $C_0 \cup C_3$ are identical,
the result follows.
\end{proof}

Now we study a relationship between the doubly even self-dual
codes satisfying the condition (\ref{eq:Tdec})
and the even self-dual additive $\FF_4$-codes.
Let $\cC$ be an even
self-dual additive $\FF_4$-code of length $n$. From $\cC$, the
following code of length $4n$ is obtained:
\[
B(\cC)= \rho({\cC})+\{(0000),(1111)\}^n,
\]
where $\rho$ is the map $\FF_4^n \rightarrow \FF_2^{4n}$
induced from $\FF_4 \rightarrow \FF_2^4$,
$0 \mapsto (0000)$,
$1 \mapsto (1100)$,
$\omega \mapsto (1010)$ and
$\omega^2 \mapsto (0110)$.
It is easy to see that
if $\cC$ is an even self-dual additive $\FF_4$-code of length $n$,
then $B(\cC)$ is a doubly even self-dual code of length $4n$
and $A_4 \ge n$,
containing a $T$-decomposition.
Conversely, if a doubly even self-dual code $C$ contains a
$T$-decomposition, then
$C \cong B(\cC)$ for some even self-dual additive $\FF_4$-code
$\cC$~\cite{S}.
A classification of extremal even self-dual additive $\FF_4$-codes
of length $10$ was given in~\cite{BG00}.
There are $19$ such codes, up to equivalence~\cite[Table 2]{BG00}.
We verified that, by a mapping
$\cC \mapsto B(\cC),$ the $19$ inequivalent
extremal even self-dual additive $\FF_4$-codes
of length $10$ give the $19$ inequivalent doubly even self-dual codes
of length $40$
satisfying (\ref{eq:Tdec}).

\begin{rem}
Generator matrices of the $19$ extremal even self-dual additive
$\FF_4$-codes of length $10$ are listed in~\cite[Appendix]{BG00}, but
the generator matrix for $QC\_10l$ is the same as that for $QC\_10b$.
Note that only $QC\_10l$ has automorphism group of order
$16$~\cite[Table 2]{BG00}.
We found a code $\cC_{10}$ with $\#\Aut(\cC_{10})=16$
by~\cite[Theorem 6]{DP06} from a graph on $10$ vertices,
where $\Aut(\cC_{10})$ was determined
using the method given in~\cite[p.~1373]{CRSS},
by calculating
the automorphism groups of some related binary codes of length $30$.
A generator matrix of  $\cC_{10}$ is listed in Figure~\ref{Fig:F4}.
\end{rem}

\begin{figure}[htb]
\centering
{\small
\[
\left(\begin{array}{cccccccccc}
\omega&1&1&1&1&1&0&0&0&0\\
1&\omega&0&0&0&1&1&0&0&0\\
1&0&\omega&0&1&0&0&1&1&1\\
1&0&0&\omega&0&1&0&1&0&0\\
1&0&1&0&\omega&0&1&0&1&1\\
1&1&0&1&0&\omega&0&0&0&0\\
0&1&0&0&1&0&\omega&0&0&1\\
0&0&1&1&0&0&0&\omega&1&0\\
0&0&1&0&1&0&0&1&\omega&0\\
0&0&1&0&1&0&1&0&0&\omega
\end{array}\right)
\]
\caption{A generator matrix of $\cC_{10}$}
\label{Fig:F4}
}
\end{figure}

There is a unique extremal even self-dual additive $\FF_4$-code of
length $6$~\cite[Table 1]{Hohn}, and the code $F_{24}$
in~\cite[Table II]{PS75} is a unique doubly even self-dual code of
length $24$ satisfying the condition (\ref{eq:Tdec}). There are
three inequivalent extremal even self-dual additive $\FF_4$-codes
of length $8$~\cite[Table 1]{Hohn}. These codes are No.~19, 20, 21
in~\cite[Table 1]{Hohn}, and we denote them by $\cC_{19}$,
$\cC_{20}$, $\cC_{21}$, respectively.
We verified that
$B(\cC_{19})$,  $B(\cC_{20})$ and $B(\cC_{21})$
are equivalent to C$67$, C$68$ and C$69$ in~\cite[Table A]{CPS},
respectively, which are
the doubly even self-dual codes of length $32$
satisfying the condition (\ref{eq:Tdec})
(see Remark~\ref{rem:2432}).
Hence, there is a one-to-one correspondence between the
equivalence classes of doubly even self-dual codes of length $4n$
satisfying (\ref{eq:Tdec}) and the equivalence classes of extremal
even self-dual additive $\FF_4$-codes of length $n$ for
$n=6,8,10$.

%

\subsection{Weight enumerators
$W_{40,C,\beta}$ ($\beta=1,2,\ldots,8$)}
In this subsection,
we continue a classification of extremal singly even self-dual
codes by modifying the classification
method for the case $\beta=10$,
which was given in the previous subsection.

Suppose that $1\leq\beta\leq 8$. 
Let $C$ be a doubly even self-dual code of length $40$.
By modifying the definition of $T$-decompositions, we consider a
collection $\{T_1,T_2, \ldots, T_\beta\}$ satisfying the
conditions (\ref{eq:T2}), (\ref{eq:T3}) and
\begin{equation}\label{eq:T4}
T_i \cap T_j =\emptyset
\end{equation}
for $i \ne j$. Denote by $N^\beta_{DE}(40)$ the number of the
inequivalent doubly even self-dual codes with $A_4=\beta$ and by
$\overline{N}^\beta_{DE}(40)$ the number of those codes among them
which satisfy (\ref{eq:T2}), (\ref{eq:T3})  and (\ref{eq:T4}). It
is trivial that $N^\beta_{DE}(40)=\overline{N}^\beta_{DE}(40)$ for
$\beta=1,2$. The numbers $N^\beta_{DE}(40)$ and
$\overline{N}^\beta_{DE}(40)$ for $\beta=1,2,\ldots,8$ are
obtained in~\cite{BHM40}, and the results are listed in
Table~\ref{Tab:2}.


\begin{table}[thb]
\caption{$N^\beta_{DE}(40)$,
$\overline{N}^\beta_{DE}(40)$ and $N^\beta_{SE}(40)$
for $\beta=1,2,\ldots,8$}
\label{Tab:2}
\begin{center}
{\small
\begin{tabular}{c|rr|r}
\noalign{\hrule height0.8pt}
$\beta$ &
\multicolumn{1}{c}{$N^\beta_{DE}(40)$}&
\multicolumn{1}{c|}{$\overline{N}^\beta_{DE}(40)$}&
\multicolumn{1}{c}{$N^\beta_{SE}(40)$}\\
\hline
1 & 20034 & 20034 & 4674608 \\
2 & 17276 & 17276 & 1511827 \\
3 & 12168 & 11241 &  337565 \\
4 &  8471 &  6645 &   64692 \\
5 &  5552 &  3115 &   11009 \\
6 &  3916 &  1380 &    2413 \\
7 &  2610 &   405 &     405 \\
8 &  1932 &   120 &     120 \\
\noalign{\hrule height0.8pt}
  \end{tabular}
}
\end{center}
\end{table}

%
For $\beta=1,2,\dots,8$,
a similar argument to that given in the previous subsection
shows that every
extremal singly even self-dual code with weight enumerator
$W_{40,C,\beta}$
can be constructed from some doubly even self-dual
code with $A_4=\beta$ containing $\{T_1,T_2, \ldots, T_\beta\}$,
which
satisfies the conditions
(\ref{eq:T2}), (\ref{eq:T3}), (\ref{eq:T4}).
Moreover, the argument
for equivalence tests given in the previous subsection
is also applied to the cases $\beta=1,2,\dots,8$.
Hence, for these cases,
we were able to complete
the classification of extremal singly even self-dual
codes with weight enumerator $W_{40,C,\beta}$.
For the case $\beta=1$,
the calculations took about 3 months
using 6 cores of a PC Intel i7 6 core processor.
The numbers $N^\beta_{SE}(40)$ of inequivalent extremal
singly even self-dual codes of length $40$ with weight enumerators
$W_{40,C,\beta}$ are also listed in Table~\ref{Tab:2}
for $\beta=1,2,\ldots,8$.
Combined with Proposition~\ref{prop:b10},
we have the following:

\begin{prop}
There are $6602658$
inequivalent  extremal singly even self-dual codes
of length $40$ with shadows of minimum weight $4$.
\end{prop}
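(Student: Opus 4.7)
The plan is to observe that the proposition is a direct arithmetic consequence of the classifications already established in this section. First I would identify which weight enumerators $W_{40,C,\beta}$ correspond to shadows of minimum weight $4$. From the shadow weight enumerator $W_{40,S,\beta}=\beta y^4 + (320-8\beta) y^8 + \cdots$, the shadow has minimum weight $4$ precisely when $\beta \geq 1$. Combined with the result of~\cite{HM06} that the admissible values are $\beta \in \{0,1,\ldots,8,10\}$, the codes to be counted are exactly those with $\beta \in \{1,2,\ldots,8,10\}$.

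Next I would collect the counts. Proposition~\ref{prop:b10} contributes $19$ inequivalent codes for the case $\beta=10$, while the column $N^\beta_{SE}(40)$ of Table~\ref{Tab:2} contributes $4674608$, $1511827$, $337565$, $64692$, $11009$, $2413$, $405$, $120$ inequivalent codes for $\beta=1,2,\ldots,8$ respectively. Because each extremal singly even self-dual code realizes a unique weight enumerator $W_{40,C,\beta}$, these nine classes are pairwise disjoint and their cardinalities may simply be added.

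The remaining step is the summation
\[
19+120+405+2413+11009+64692+337565+1511827+4674608=6602658,
\]
which yields the claimed total. There is essentially no obstacle at this final stage; the genuine work lies upstream, namely in the reduction (via Lemma~\ref{lem:1} and the uniqueness of the weight-$4$ doubly even neighbour) to doubly even self-dual codes carrying a partial $T$-decomposition satisfying (\ref{eq:T2}), (\ref{eq:T3}), (\ref{eq:T4}), and in the large computer-aided equivalence enumerations tabulated as $N^\beta_{SE}(40)$.
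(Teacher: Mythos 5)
Your proposal is correct and matches the paper's own (implicit) argument: the proposition is obtained exactly by summing the $19$ codes of Proposition~\ref{prop:b10} for $\beta=10$ with the entries $N^\beta_{SE}(40)$ of Table~\ref{Tab:2} for $\beta=1,\ldots,8$, noting that shadows of minimum weight $4$ correspond to $\beta\ge 1$ and that $\beta=9$ does not occur. The arithmetic $19+120+405+2413+11009+64692+337565+1511827+4674608=6602658$ checks out.
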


In Table~\ref{Tab:AutCR1},
we list the numbers $N(\#\Aut,R)$ of inequivalent extremal
singly even self-dual codes
with automorphism groups of order $\#\Aut$
and covering radii $R$
for $\beta=1,2,\ldots,8$.

\begin{table}[thbp]
\caption{$(\#\Aut,R,N(\#\Aut,R))$ for $\beta=1,2,\ldots,8$}
\label{Tab:AutCR1}
\begin{center}
{\footnotesize
\begin{tabular}{cccccc}
\noalign{\hrule height0.8pt}
\multicolumn{6}{c}{$(\#\Aut,R,N(\#\Aut,R))$}\\
\hline
\multicolumn{6}{c}{$\beta=1$}\\
\hline
$(1, 7, 370397)$ &
$(1, 8, 4235394)$ &
$(2, 7, 8663)$ &
$(2, 8, 53572)$ &
$(3, 7, 38)$ &
$(3, 8, 302)$ \\
$(4, 7, 908)$ &
$(4, 8, 3965)$ &
$(6, 7, 21)$ &
$(6, 8, 75)$ &
$(8, 7, 153)$ &
$(8, 8, 645)$ \\
$(12, 7, 4)$ &
$(12, 8, 51)$ &
$(14, 7, 1)$ &
$(16, 7, 98)$ &
$(16, 8, 160)$ &
$(18, 7, 1)$ \\
$(18, 8, 2)$ &
$(21, 8, 1)$ &
$(24, 7, 5)$ &
$(24, 8, 25)$ &
$(32, 7, 47)$ &
$(32, 8, 45)$ \\
$(36, 8, 4)$ &
$(48, 7, 4)$ &
$(48, 8, 4)$ &
$(64, 7, 3)$ &
$(64, 8, 3)$ &
$(72, 8, 4)$ \\
$(96, 7, 2)$ &
$(96, 8, 3)$ &
$(128, 7, 3)$ &
$(192, 8, 2)$ &
$(1008, 8, 1)$ &
$(1728, 7, 1)$ \\
$(4032, 8, 1)$ \\
\hline
\multicolumn{6}{c}{$\beta=2$}\\
\hline
$(1, 7, 96666)$ &
$(1, 8, 1359341)$ &
$(2, 7, 5464)$ &
$(2, 8, 39474)$ &
$(3, 7, 10)$ &
$(3, 8, 117)$ \\
$(4, 7, 932)$ &
$(4, 8, 6042)$ &
$(6, 7, 5)$ &
$(6, 8, 76)$ &
$(8, 7, 167)$ &
$(8, 8, 1760)$ \\
$(12, 7, 10)$ &
$(12, 8, 34)$ &
$(16, 7, 108)$ &
$(16, 8, 741)$ &
$(24, 7, 2)$ &
$(24, 8, 16)$ \\
$(32, 7, 51)$ &
$(32, 8, 322)$ &
$(48, 8, 15)$ &
$(64, 7, 9)$ &
$(64, 8, 208)$ &
$(96, 7, 1)$ \\
$(96, 8, 7)$ &
$(128, 7, 1)$ &
$(128, 8, 106)$ &
$(144, 8, 1)$ &
$(192, 8, 6)$ &
$(256, 7, 3)$ \\
$(256, 8, 52)$ &
$(288, 8, 1)$ &
$(384, 8, 4)$ &
$(512, 8, 24)$ &
$(768, 7, 1)$ &
$(768, 8, 7)$ \\
$(1024, 7, 1)$ &
$(1024, 8, 12)$ &
$(1536, 8, 2)$ &
$(2048, 7, 1)$ &
$(2048, 8, 12)$ &
$(4096, 8, 5)$ \\
$(4608, 8, 1)$ &
$(6144, 8, 1)$ &
$(8192, 8, 1)$ &
$(12288, 8, 2)$ &
$(16384, 8, 2)$ &
$(98304, 7, 1)$ \\
$(196608, 8, 2)$ & \\
\hline
\multicolumn{6}{c}{$\beta=3$}\\
\hline
$(1,7,13222) $&
$(1,8,300610)$&
$(2, 7,2014) $&
$(2,8,16954) $&
$(3, 7, 8)   $&
$(3, 8, 46)  $\\
$(4, 7, 482) $&
$(4, 8,2923) $&
$(6, 7, 8)   $&
$(6, 8, 51)  $&
$(8, 7, 123) $&
$(8, 8, 643) $\\
$(12, 7, 6)  $&
$(12, 8, 48) $&
$(16, 7, 56) $&
$(16, 8,214) $&
$(24, 7, 4)  $&
$(24, 8, 11) $\\
$(32, 7, 23) $&
$(32, 8, 84) $&
$(48, 7, 1)  $&
$(48, 8, 12) $&
$(64, 7, 3)  $&
$(64, 8, 13) $\\
$(96, 7, 1)  $&
$(128, 8, 2) $&
$(144, 7, 1) $&
$(192, 7, 1) $&
$(384, 8, 1) $&\\
\hline
\multicolumn{6}{c}{$\beta=4$}\\
\hline
$(1, 7, 708)$ &
$(1, 8, 51223)$ &
$(2, 7, 349)$ &
$(2, 8, 7605)$ &
$(3, 7, 4)$ &
$(3, 8, 42)$ \\
$(4, 7, 132)$ &
$(4, 8, 2309)$ &
$(6, 7, 6)$ &
$(6, 8, 32)$ &
$(8, 7, 58)$ &
$(8, 8, 943)$ \\
$(12, 7, 5)$ &
$(12, 8, 29)$ &
$(16, 7, 24)$ &
$(16, 8, 454)$ &
$(18, 7, 1)$ &
$(18, 8, 2)$ \\
$(24, 7, 3)$ &
$(24, 8, 18)$ &
$(32, 7, 14)$ &
$(32, 8, 284)$ &
$(36, 8, 2)$ &
$(48, 8, 7)$ \\
$(64, 7, 1)$ &
$(64, 8, 162)$ &
$(72, 8, 2)$ &
$(96, 7, 4)$ &
$(96, 8, 7)$ &
$(128, 7, 2)$ \\
$(128, 8, 108)$ &
$(144, 8, 1)$ &
$(192, 7, 1)$ &
$(192, 8, 4)$ &
$(256, 7, 2)$ &
$(256, 8, 36)$ \\
$(288, 7, 1)$ &
$(288, 8, 1)$ &
$(384, 8, 9)$ &
$(512, 7, 1)$ &
$(512, 8, 29)$ &
$(576, 8, 2)$ \\
$(768, 8, 4)$ &
$(1024, 8, 12)$ &
$(1536, 8, 11)$ &
$(1728, 8, 1)$ &
$(2048, 8, 14)$ &
$(3072, 8, 3)$ \\
$(4096, 8, 3)$ &
$(6144, 8, 3)$ &
$(8192, 8, 6)$ &
$(9216, 8, 1)$ &
$(16384, 8, 1)$ &
$(20736, 7, 1)$ \\
$(49152, 7, 1)$ &
$(98304, 8, 2)$ &
$(294912, 8, 2)$\\
\noalign{\hrule height0.8pt}
  \end{tabular}
}
\end{center}
\end{table}

\setcounter{table}{2}
\begin{table}[thb]
\caption{$(\#\Aut,R,N(\#\Aut,R))$ for $\beta=1,2,\ldots,8$ (continued)}
\begin{center}
{\footnotesize
\begin{tabular}{cccccc}
\noalign{\hrule height0.8pt}
\multicolumn{6}{c}{$(\#\Aut,R,N(\#\Aut,R))$}\\
\hline
\multicolumn{6}{c}{$\beta=5$}\\
\hline
$(1, 7, 51)$  &
$(1, 8, 6723)$&
$(2, 7, 75)$  &
$(2, 8, 2498)$&
$(3, 8, 14)$  &
$(4, 7, 47)$  \\
$(4, 8, 974)$ &
$(6, 7, 1)$   &
$(6, 8, 12)$  &
$(8, 7, 20)$  &
$(8, 8, 348)$ &
$(10, 8, 5)$  \\
$(12, 7, 5)$  &
$(12, 8, 19)$ &
$(16, 7, 11)$ &
$(16, 8, 119)$&
$(24, 8, 8)$  &
$(32, 7, 8)$  \\
$(32, 8, 42)$ &
$(48, 8, 5)$  &
$(64, 7, 2)$  &
$(64, 8, 11)$ &
$(96, 8, 1)$  &
$(120, 8, 2)$ \\
$(128, 7, 1)$ &
$(128, 8, 3)$ &
$(144, 8, 1)$ &
$(192, 8, 1)$ &
$(320, 7, 1)$ &
$(384, 8, 1)$ \\
\hline
\multicolumn{6}{c}{$\beta=6$}\\
\hline
$(1, 8, 430)$ &
$(2, 7, 9)$   &
$(2, 8, 676)$ &
$(3, 8, 1)$   &
$(4, 7, 8)$   &
$(4, 8, 424)$ \\
$(6, 8, 5)$   &
$(8, 7, 8)$   &
$(8, 8, 268)$ &
$(12, 7, 1)$  &
$(12, 8, 9)$  &
$(16, 7, 7)$  \\
$(16, 8, 173)$&
$(24, 8, 6)$  &
$(32, 7, 3)$  &
$(32, 8, 110)$&
$(48, 7, 1)$  &
$(48, 8, 6)$  \\
$(64, 7, 2)$  &
$(64, 8, 84)$ &
$(72, 8, 1)$  &
$(96, 7, 1)$  &
$(96, 8, 4)$  &
$(128, 7, 2)$ \\
$(128, 8, 70)$&
$(144, 7, 1)$ &
$(192, 7, 1)$ &
$(256, 8, 22)$&
$(288, 7, 1)$ &
$(288, 8, 1)$ \\
$(384, 8, 4)$ &
$(512, 7, 1)$ &
$(512, 8, 29)$&
$(768, 8, 6)$ &
$(1024, 8, 8)$&
$(1536, 8, 2)$\\
$(2048, 8, 9)$&
$(3072, 8, 4)$&
$(6144, 7, 1)$&
$(6144, 8, 5)$&
$(8192, 8, 5)$&
$(24576, 8, 3)$\\
$(49152, 8, 1)$ \\
\hline
\multicolumn{6}{c}{$\beta=7$}\\
\hline
$( 4, 8, 109)$ &
$( 8, 7,   1)$ &
$( 8, 8, 138)$ &
$(12, 8,   4)$ &
$(16, 7,   2)$ &
$(16, 8,  79)$ \\
$(24, 7,   1)$ &
$(24, 8,   6)$ &
$(32, 7,   1)$ &
$(32, 8,  19)$ &
$(48, 8,   8)$ &
$(64, 7,   1)$ \\
$(64, 8,  13)$ &
$(72, 8,   2)$ &
$(96, 7,   1)$ &
$(96, 8,   8)$ &
$(128, 8,  1)$ &
$(144, 8,  1)$ \\
$(192, 7,  1)$ &
$(192, 8,  1)$ &
$(288, 7,  1)$ &
$(384, 7,  1)$ &
$(384, 8,  2)$ &
$(576, 8,  2)$ \\
$(1728, 7, 1)$ &
$(5184, 8, 1)$ \\
\hline
\multicolumn{6}{c}{$\beta=8$}\\
\hline
$( 32, 8,  6)$ &
$( 64, 8, 21)$ &
$( 96, 8,  1)$ &
$(128, 8, 26)$ &
$(192, 8,  3)$ &
$(256, 8, 11)$ \\
$(384, 8,  4)$ &
$(512, 8, 17)$ &
$(768, 7,  1)$ &
$(1024,7, 1)$ &
$(1024,8, 4)$ &
$(1536,8, 5)$ \\
$(2048,8, 5)$ &
$(3072,8, 4)$ &
$(4096,8, 2)$ &
$(5376,8, 1)$ &
$(8192,8, 3)$ &
$(12288, 8, 1)$ \\
$(16384, 8, 2)$ &
$(36864, 8, 1)$ &
$(688128, 7, 1)$ \\
\noalign{\hrule height0.8pt}
  \end{tabular}
}
\end{center}
\end{table}

\subsection{Another approach and weight enumerator $W_{40,C,0}$}
\label{Subsection:0}
The classification of extremal self-dual codes of length $40$ was
also completed by an approach which does not depend on the weight
enumerators. As a consequence, we have a classification of
extremal singly even self-dual codes with weight enumerator
$W_{40,C,0}$, which is the remaining case. Here we describe how
such a classification was done. 


From the self-dual codes of length $38$ and minimum weights $6$
and $8$, we complete a classification of the extremal self-dual
codes of length $40$, using the algorithm presented in
\cite{BB38}, noting that a classification of self-dual codes
of length $38$ was done in \cite{BB38}. We emphasize that the used
algorithm has a better way to deal with equivalence classes and it
gives as output exactly one representative of every equivalence
class. The constructive part of the algorithm is not different
from the other recursive constructions of self-dual codes,
which were listed in \cite{AGKSS38},
but to take only one representative of any
equivalence class, we used a completely different manner. Its
special feature is that practically there is not equivalence test
for the objects. Algorithms of this type are known as
isomorph-free generation \cite{McK}.

The algorithm for isomorph-free generation solves two main
problems. The first one is to find only inequivalent objects (in
our case extremal self-dual codes of length 40) using a basic
object $B$ from the previous step (in our case a self-dual code of
length $38$ and minimum weight $6$ or $8$). It is easy to solve
this by defining an action of the automorphism group $\Aut(B)$ on
the set of the objects constructed from $B$. For this
construction, we use the method described in \cite{HM36}.

The second main problem is how to take only the inequivalent
objects among already constructed ones (after solving the first
problem). This problem and our solution is explained in details in
\cite{BB38}. We give here only the general idea. We use the
concept for a canonical labeling map and a canonical
representative of an equivalence class. We consider here the
action of the symmetric group $S_n$ on the set of all self-dual
codes of length $n$. We fix a so called \emph{canonical
representative} for any equivalence class which is selected on the
base of some specific conditions. The canonical representative is
intended to make easily a distinction among the equivalence
classes. The function which maps any code $C$ to the canonical
representative of its equivalence class is called a
\emph{canonical representative map}. This map is realized by a
proper algorithm which uses invariants. Using the canonical
representative, we define an ordering of the coordinates of the
code $C$. This ordering is unique only if $\#\Aut(C)=1$. In
the other cases there are $\#\Aut(C)$ such orderings. To take
only the inequivalent codes we use a parent test for each code
obtained in the first part of the algorithm. An example for such
test is the following: A code passes the test if the added two
coordinates during the construction are the biggest (last) two
coordinates in the defined ordering.

Using the above algorithm, all inequivalent extremal doubly even
self-dual codes as well as extremal singly even self-dual codes of
length $40$ were obtained. The calculations took about two months
using four cores of a PC Intel i5 4 core processor. We have the
following:

\begin{thm}\label{thm:40}
There are $10200655$ inequivalent  extremal singly even self-dual
codes of length $40$.
\end{thm}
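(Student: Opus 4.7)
The plan is to produce exactly one representative of every equivalence class of extremal self-dual $[40,20,8]$ code by a single isomorph-free recursive construction from the previously classified self-dual $[38,19,d]$ codes with $d\in\{6,8\}$, and then to separate the singly even codes from the doubly even ones by their weight distribution mod $4$. Any self-dual $[40,20,8]$ code $C$ contains $\allone$, so puncturing any two coordinates yields a self-dual code of length $38$ whose minimum weight is $\ge 6$; every such parent is already available from the enumeration of \cite{BB38}. Starting from each such $B$, I would generate all self-dual extensions to length $40$ by the column-adjoining method of \cite{HM36} and discard those whose minimum weight drops below $8$.

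The technical heart is to keep only one representative from each equivalence class without running pairwise equivalence tests on the millions of candidate children. Following \cite{BB38}, fix a canonical representative map $\kappa$ on the $S_{40}$-orbits of self-dual codes of length $40$, computed from numerical invariants; this map induces a total ordering on the coordinates of $\kappa(C)$, unique up to $\Aut(C)$. A child $C$ built from a parent $B$ is declared to pass the \emph{parent test} if and only if the two coordinates added during the extension step coincide with the last two coordinates of $C$ in this canonical ordering. Since the test depends only on $C$, any two equivalent children reduce to the same canonical form and hence exactly one ordered pair $(B,C)$ survives per equivalence class; conversely, every equivalence class produces a surviving pair by removing its last two canonical coordinates. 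Thus the surviving codes form a transversal of the equivalence classes.

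Aggregating the survivors, I would stratify them according to whether $C$ is doubly even or singly even, and within the singly even case according to the shadow parameter $\beta$. The doubly even count must reproduce the $16470$ extremal codes of \cite{BHM40}, and the singly even counts for $\beta=1,\ldots,8,10$ must agree with the numbers $N^\beta_{SE}(40)$ listed in Table~\ref{Tab:2} together with Proposition~\ref{prop:b10}; any discrepancy would point to an error in one of the two independent computations. The remaining singly even codes are exactly those with weight enumerator $W_{40,C,0}$. Adding their contribution to the $6602658$ codes with shadows of minimum weight $4$ produces the announced total of $10200655$.

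The main obstacle is practical rather than mathematical: the candidate search tree is enormous (tens of millions of extensions), so any residual equivalence test would be infeasible. The proof therefore rests entirely on the correctness and efficiency of the canonical representative / parent test machinery from \cite{BB38}, and the cross-stratum agreement with the independently obtained counts of Subsections~\ref{Subsec:C} and~3.3 serves as a strong empirical sanity check on the entire computation.
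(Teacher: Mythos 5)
Your proposal is correct and follows essentially the same route as the paper: a recursive isomorph-free generation from the classified self-dual codes of length $38$ with minimum weight $6$ or $8$, using the extension construction of \cite{HM36} together with the canonical-representative map and parent test of \cite{BB38}, with the doubly even and $\beta\ge 1$ strata serving as cross-checks against \cite{BHM40} and Table~\ref{Tab:2}. The only cosmetic difference is that the paper additionally stresses reducing children by the $\Aut(B)$-action on extensions of each parent, a practical point your outline leaves implicit.
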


As a summary, we list in Table~\ref{Tab:40} the numbers
$N^\beta_{SE}(40)$ of inequivalent  extremal singly even self-dual
codes with weight enumerators $W_{40,C,\beta}$.

\begin{table}[thb]
\caption{Extremal singly even self-dual codes of length $40$}
\label{Tab:40}
\begin{center}
{\small
\begin{tabular}{c|r||c|r}
\noalign{\hrule height0.8pt} $\beta$ &
\multicolumn{1}{c||}{$N^\beta_{SE}(40)$} &
$\beta$ & \multicolumn{1}{c}{$N^\beta_{SE}(40)$}\\
\hline
0 & 3597997 &5 &   11009 \\
1 & 4674608 &6 &    2413 \\
2 & 1511827 &7 &     405 \\
3 &  337565 &8 &     120 \\
4 &   64692 &10&      19 \\
\noalign{\hrule height0.8pt}
  \end{tabular}
}
\end{center}
\end{table}

For the weight enumerator $W_{40,C,0}$, we list in
Table~\ref{Tab:AutCR0} the numbers $N(\#\Aut,R)$ of inequivalent
extremal singly even self-dual codes with automorphism groups of
order $\#\Aut$ and covering radii $R$. Since the shadow of $C$ is
a coset of $C$ having minimum weight $8$, we have that $R \ge 8$.
It follows from the classification that there is no extremal
singly even self-dual code with $R \ge 9$.

\begin{table}[thbp]
\caption{$(\#\Aut,R,N(\#\Aut,R))$ for $\beta=0$}
\label{Tab:AutCR0}
\begin{center}
{\footnotesize
\begin{tabular}{cccccc}
\noalign{\hrule height0.8pt}
\multicolumn{6}{c}{$(\#\Aut,R,N(\#\Aut,R))$}\\
\hline $(1, 8, 3542831)$& $(2, 8, 48796)$& $(3, 8, 222)$& $(4, 8,
4273)$& $(5, 8, 4)$ &
$(6, 8, 133)$\\
$(8, 8, 904)$& $(10, 8, 5)$& $(12, 8, 59)$& $(16, 8, 379)$ & $(20,
8, 4)$&
$(24, 8, 29)$\\
$(32, 8, 113)$& $(40, 8, 2)$& $(48, 8, 19)$ & $(64, 8, 103)$&
$(96, 8, 3)$&
$(120, 8, 5)$\\
$(128, 8, 35)$& $(160, 8, 1)$ & $(192, 8, 6)$& $(256, 8, 22)$&
$(384, 8, 2)$&
$(512, 8, 14)$\\
$(640, 8, 1)$ & $(768, 8, 3)$& $(1024, 8, 6)$& $(1536, 8, 4)$&
$(2048, 8, 7)$&
$(3072, 8, 2)$ \\
$(4096, 8, 2)$& $(6144, 8, 1)$& $(8192, 8, 1)$& $(12288, 8, 1)$&
$(16384, 8, 2)$&
$(20480, 8, 1)$\\
$(24576, 8, 1)$& $(491520, 8, 1)$&
& \\
\noalign{\hrule height0.8pt}
  \end{tabular}
}
\end{center}
\end{table}

\subsection{Automorphism groups}\label{Sect:Groups}

Here we consider automorphism groups of extremal
self-dual codes of length $40$.
It can be shown without the classification
that if a prime $p$ divides the order
of the automorphism group of an extremal
self-dual code of length $40$
then $p \in \{2,3,5,7,19\}$ (see \cite{BY40}).
By \cite{BHM40} and Theorem \ref{thm:40},
there are $10 214 125$ extremal self-dual codes of length $40$.
Their automorphism groups are divided into $91$ different orders.
We present the number of the codes in respect to the order of
their automorphism group in Table \ref{Table:aut_groups}.

The $9 972 575$ codes
(more than $97\%$ of $10 214 125$ codes)
have a trivial automorphism group (group of order
1).
This means that only $228 849$ codes
have nontrivial automorphism groups.
There are $189 449$ codes with
automorphism group of order $2$.
So the codes with automorphism
group of order $1$ or $2$ are more than $99.61\%$ of all inequivalent
codes. This calculation substantiates some conjectures from
\cite{BB38} and very small number of the codes have bigger
automorphism groups.

%

Extremal singly even self-dual codes of length $40$ having
automorphisms of odd prime order $p\ge 5$ were considered
in \cite{BY40}.
Here we give the current knowledge on the existence of
such codes for $p \ge 3$.

\begin{itemize}
\item $p=19$:
No such code exists for this case.
\item $p=7$:
Unfortunately, there is a mistake in \cite{BY40} because there are
six inequivalent extremal singly even self-dual codes of length $40$ with
an automorphism of order 7 - two codes with weight enumerator
$W_{40,C,8}$ and automorphism groups of orders $688 128
=2^{15}\cdot 21$ and $5376 = 2^{8}\cdot 21$, and four codes with
weight enumerator $W_{40,C,1}$ and automorphism groups of orders
$4032 = 2^{6}\cdot 63$, $1008 = 2^{4}\cdot 63$, $21$ and $14$.
Bouyuklieva and Yorgov \cite{BY40} missed four of the codes
because they had to consider five different supports for the
codeword of weight 2 in $\pi(F_{\sigma}(C))$, namely $\{1,6\}$,
$\{2,6\}$, $\{3,6\}$, $\{4,6\}$ and $\{5,6\}$. So the codes with
weight enumerator $W_{40,C,8}$
could be constructed as a combination of the matrix
$H_4$ and $\pi(F_{\sigma}(C))$ with $\{2,6\}$ as a support of the
weight 2 vector, and $H_3$ with $\{4,6\}$ as this support. The
additional two codes with weight enumerator $W_{40,C,1}$
are obtained from $H_1$ with
$\{2,6\}$ and $H_2$ with $\{3,6\}$ (see \cite{BY40} for the
details of the construction and for the matrices $H_1$, $H_2$,
$H_3$, $H_4$).
\item $p=5$:
Exactly $39$ inequivalent singly even self-dual
codes of length $40$ have an automorphism of order 5 - two codes more
than it is stated in \cite{BY40}. The authors missed there two
codes with weight enumerator $W_{40,C,0}$.

\item $p=3$:
Exactly $1986$ inequivalent extremal singly even self-dual codes
of length $40$ have an automorphism of order $3$.

\end{itemize}

We formulate these corrections on the cases $p=5$ and $7$.

\begin{cor}\label{aut5-7}
There are exactly $6$ and $39$
inequivalent extremal singly even self-dual
codes of length $40$ having an automorphism of order $7$ and $5$,
respectively.
\end{cor}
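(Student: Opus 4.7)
The plan is to revisit the construction method of Bouyuklieva and Yorgov \cite{BY40} for self-dual codes of length $40$ admitting an automorphism $\sigma$ of prime order $p$, locate the gaps in their case analysis, and then independently confirm the corrected counts by filtering the complete list of $10200655$ inequivalent extremal singly even self-dual codes produced in Theorem~\ref{thm:40} by the condition that $p$ divides $\#\Aut(C)$. Since $10214125 = 16470 + 10200655$ codes are now available explicitly (doubly even plus singly even), the second step is purely mechanical and serves as an independent verification of the first.

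For $p=7$, I would first recall the decomposition $C = F_\sigma(C) \oplus E_\sigma(C)$ into the fixed subcode and its complement under the action of the cyclic group $\langle\sigma\rangle$, together with the projection $\pi$ of $F_\sigma(C)$ onto the fixed coordinates. A dimension count in \cite{BY40} forces $\pi(F_\sigma(C))$ to contain a codeword of weight $2$, supported on a pair of the form $\{i,6\}$ with $i \in \{1,2,3,4,5\}$. All five possibilities must be enumerated; \cite{BY40} inadvertently omitted the subcases $\{2,6\}$ and $\{3,6\}$ for certain matrices $H_j$. Running the construction through all five supports with each of the admissible matrices $H_1,H_2,H_3,H_4$ of \cite{BY40} produces four additional codes beyond those in \cite{BY40}: two with weight enumerator $W_{40,C,8}$ (from $H_4$ paired with $\{2,6\}$ and $H_3$ paired with $\{4,6\}$) and two with weight enumerator $W_{40,C,1}$ (from $H_1$ with $\{2,6\}$ and $H_2$ with $\{3,6\}$). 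Checking pairwise equivalence of the six resulting codes and confirming they are inequivalent yields the stated count of $6$.

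For $p=5$, the construction of \cite{BY40} is already quite systematic, and the two extra codes have weight enumerator $W_{40,C,0}$, which was the weight enumerator least accessible to weight-enumerator-based methods. Here the cleanest route is to start from the complete list of extremal singly even self-dual codes of length $40$ with $W_{40,C,0}$, select the $5$ codes whose automorphism groups have order divisible by $5$ (visible in Table~\ref{Tab:AutCR0} in the entries $(10,8,5)$), and add them to the $37$ codes listed in \cite{BY40}, verifying that the two new $W_{40,C,0}$ codes account precisely for the discrepancy.

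The main obstacle is not the construction itself but ensuring exhaustiveness of the case analysis for $p=7$ and the correctness of the equivalence tests among the newly produced codes. Both concerns are eliminated by the independent check against Theorem~\ref{thm:40}: filtering the full classification by $7 \mid \#\Aut(C)$ (respectively $5 \mid \#\Aut(C)$) must return exactly $6$ (respectively $39$) codes, with the automorphism group orders matching those listed in the bulleted discussion, and this match certifies that no further codes have been overlooked.
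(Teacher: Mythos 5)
Your decisive step is the same as the paper's: the corollary is read off from the complete classification of Theorem~\ref{thm:40} by computing automorphism groups and counting the codes whose group order is divisible by $7$, respectively $5$ (which, by Cauchy's theorem, is equivalent to having an automorphism of that order); the paper does exactly this, and its Table~\ref{Table:aut_groups} yields $14,21,1008,4032,5376,688128$ for $p=7$ (six codes) and a total of $39$ for $p=5$, with the discussion of the matrices $H_1,\dots,H_4$ and the supports $\{i,6\}$ serving only to explain which codes \cite{BY40} overlooked. Your $p=7$ analysis matches that discussion, and your closing verification paragraph is the actual proof.

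However, your proposed ``cleanest route'' for $p=5$ is wrong as stated. Table~\ref{Tab:AutCR0} does not show five $\beta=0$ codes with an automorphism of order $5$: the entries with order divisible by $5$ are $(5,8,4)$, $(10,8,5)$, $(20,8,4)$, $(40,8,2)$, $(120,8,5)$, $(160,8,1)$, $(640,8,1)$, $(20480,8,1)$, $(491520,8,1)$, i.e.\ $24$ codes with weight enumerator $W_{40,C,0}$, not just the $(10,8,5)$ entry. Moreover, ``adding'' codes taken from the new classification to the $37$ codes of \cite{BY40} is not a legitimate count: the two lists overlap (many of the $37$ codes already have $\beta=0$ or other weight enumerators represented among the $24$), and in any case $37+5\neq 39$. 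The only sound way to certify the number $39$ --- and the one you should promote from ``independent check'' to the proof itself --- is to count, in the full classification, the extremal singly even codes with $5\mid\#\Aut(C)$; Table~\ref{Table:aut_groups} gives $4+10+4+2+7+1+1+1+2+1+1+1+1+1+1+1=39$, and the statement that exactly two of these (both with $W_{40,C,0}$) are missing from \cite{BY40} is then a comparison with that paper's list, not an ingredient of the count.
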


\begin{table}[htbp]\rm
\begin{center}
\caption{Automorphism groups of the extremal self-dual codes of
length 40}\vspace*{0.2in} \label{Table:aut_groups} {\footnotesize
\begin{tabular}{c|c|c||c|c|c||c|c|c}
\noalign{\hrule height1pt} $\#\Aut(C)$&singly&doubly&$\#\Aut(C)$&singly&doubly&$\#\Aut(C)$&singly&doubly\\
&even&even&&even&even&&even&even\\
\hline \multicolumn{9}{c}{$\#\Aut(C)=2^s$, $s=0,1,\dots,16,18$}\\
\hline  1&9977596&10400&64&639&75&4096&12&1\\
\hline 2&186149&3538&128&360&46&8192&16&--\\
\hline 4&23528&1189&256&148&21&16384&8&1\\
\hline 8&6179&459&512&115&16&32768&1&1\\
\hline 16&2625&233&1024&44&3&65536&1&1\\
\hline 32&1172&70&2048&48&4&262144&1&--\\
 \hline
 \multicolumn{9}{c}{$\#\Aut(C)=3\cdot 2^s$, $s=0,1,\dots,16,18$}\\
\hline  3&804&43&192&27&12&12288&5&2\\
\hline 6&425&68&384&28&12&24576&4&--\\
\hline 12&284&80&768&22&7&49152&5&3\\
\hline 24&134&41&1536&24&10&98304&3&--\\
\hline 48&82&34&3072&13&3&196608&2&--\\
\hline 96&44&12&6144&11&7&786432&1&1\\
\hline
 \multicolumn{9}{c}{$\#\Aut(C)=2^s\cdot 3^m$, $s=1,\dots,17$, $m=2,3,4$}\\
\hline  18&6&1&1296&--&1&20736&1&1\\
\hline 36&6&1&1728&3&1&36864&1&--\\
\hline 72&9&4&4608&1&2&110592&--&1\\
\hline 144&6&4&5184&1&--&147456&1&1\\
\hline 288&6&4&9216&1&1&294912&2&--\\
\hline 576&4&3&18432&1&1&1179648&1&--\\
\hline
 \multicolumn{9}{c}{$\#\Aut(C)=2^s\cdot 3^m\cdot 5$, $s=0,1,\dots,18$, $m=0,1,2$}\\
\hline  5&4&2&240&--&2&245760&1&1\\
\hline  10&10&8&320&1&1&327680&1&--\\
\hline  20&4&4&640&1&--&491520&1&--\\
\hline  30&--&2&720&--&2&737280&1&1\\
\hline  40&2&5&1920&--&1&983040&--&1\\
\hline  60&--&2&3840&--&1&1474560&1&1\\
\hline  120&7&5&20480&2&1&11796480&1&--\\
\hline  160&1&1&61440&--&1&&&\\
\hline
 \multicolumn{9}{c}{$\#\Aut(C)=2^s\cdot 3^m\cdot 7$, $s=0,1,\dots,18$, $m=0,1,2$}\\
\hline  14&1&--&2688&--&1&688128&1&--\\
\hline  21&1&--&4032&1&--&5505024&--&1\\
\hline  1008&1&--&5376&1&1&8257536&--&1\\
\hline
 \multicolumn{9}{c}{$\#\Aut(C)=19q$, $q=2$, 6, 360}\\
\hline  38&--&1&114&--&1&6840&--&1\\
\hline
 \multicolumn{9}{c}{$\#\Aut(C)>$ 40 000 000}\\
\hline  &&&$44 236 800$&1&1&$82 575 360$&--&1\\
\noalign{\hrule height1pt}
\end{tabular}}
\end{center}
\end{table}

\subsection{Some observations}

By the sphere-covering bound (see~\cite{A-P}),
the covering radius of a self-dual code of length $40$
is at least $6$.
From Tables~\ref{Tab:AutCR1} and~\ref{Tab:AutCR0},
there are $501337$ inequivalent
extremal singly even self-dual codes
of length $40$ with covering radius $7$, and there is no
extremal singly even self-dual code
with covering radius $6$.
Note that
there are only two inequivalent extremal doubly even self-dual codes
with covering radius $7$, and there is no
extremal doubly even self-dual code
with covering radius $6$~\cite{BHM40}.
By the Delsarte bound (see~\cite{A-P}),
the covering radius of an extremal doubly even self-dual
code of length $40$ is at most $8$.
It follows from the classification that
the covering radius of an extremal singly even self-dual code
of the same length is also at most $8$.

Let $N_{DE}(n)$ and $N_{SE}(n)$ be the numbers of inequivalent
extremal doubly even self-dual codes and singly even self-dual
codes of length $n$ and minimum weight $4\lfloor n/24 \rfloor +4$,
respectively.
Then it holds that
\begin{align*}
(N_{DE}(8),N_{SE}(8))  &=(1,0), \\
(N_{DE}(16),N_{SE}(16))&=(2,1), \\
(N_{DE}(24),N_{SE}(24))&=(1,0), \\
(N_{DE}(32),N_{SE}(32))&=(5,3), \\
(N_{DE}(40),N_{SE}(40))&=(16470, 10200655).
\end{align*}
It follows that
$40$ is the smallest length $n$ with
$N_{DE}(n) < N_{SE}(n)$.

\section{Odd unimodular lattices in dimension 40}
\label{Sec:L}

For odd unimodular lattice, we consider a situation which is
similar to that for singly even self-dual codes given
in Section~\ref{Subsec:C}.
We show that the number of vectors of norm $2$
in the shadow of an extremal odd unimodular lattice in dimension
$40$ is at most $80$.
We also give a classification of extremal odd unimodular lattices in dimension
$40$ with shadows having $80$ vectors of norm $2$.

\subsection{Frames of Type A, B and C}

Let $L$ be an even integral lattice in dimension $n$.
Let $e_1,e_2,\ldots,e_n$ be vectors of $\RR^n$
satisfying
\begin{equation}\label{eq:C1}
(e_i,e_j)=2\delta_{ij} \text{ and }
e_i\pm e_j \in L \ (1 \le i,j \le n),
\end{equation}
where $\delta_{ij}$ is the Kronecker delta. If the vectors
$e_1,e_2,\ldots,e_n$ satisfy the following conditions
\begin{align}
\label{eq:CA}
&e_1,e_2,\ldots,e_n \in L,\\
\label{eq:CB}
&e_1,e_2,\ldots,e_n
\not\in L \text{ but }\frac{1}{2} \sum_{i=1}^n \ZZ e_i \supset L,\\
\label{eq:CC}
&\frac{1}{2} \sum_{i=1}^n \ZZ e_i \not\supset L,
\end{align}
then the set of these vectors is called
{\em a frame} of Type A, B and C related to $L$,
respectively~\cite{KKM}.

Let $D$ be a doubly even code of length $n \equiv 0 \pmod 8$.
Let $e_1,e_2,\ldots,e_n$ be vectors of $\RR^n$
satisfying $(e_i,e_j)=2\delta_{ij}$ for $1 \le i,j \le n$.
Set
$\Lambda = \sum_{i=1}^n \ZZ e_i$,
$\Lambda_{\varepsilon}=\{\sum_{i=1}^n x_i e_i \mid x_i \in \ZZ,
\sum_{i=1}^n x_i \equiv \varepsilon \pmod 2\}$
($\varepsilon=0,1$).
The following lattices
are defined in~\cite{KKM}:
\begin{align*}
L_{\text{A}}(D)&=\bigcup_{x \in D}\Big(\Lambda+\frac{1}{2}e_x\Big),
\\
L_{\text{B}}(D)&=\bigcup_{x \in D}\Big(\Lambda_0+\frac{1}{2}e_x\Big),
\\
L_{\text{C}}(D)&=\bigcup_{x \in D}
\Big\{
\Big(\Lambda_0+\frac{1}{2}e_x\Big) \bigcup
\Big(\Lambda_{\varepsilon}+\frac{1}{2}e_x+\frac{1}{4}e_{\allone}\Big)
\Big\} \ (\varepsilon \equiv n/8 \pmod 2),
\end{align*}
where
$e_x=\sum_{i \in \supp(x)}e_i$ and
$\supp(x)$ denotes the support of $x$.
These lattices and their
relationships with frames of Type A, B and C
are investigated in~\cite{KKM}.
For example,
if $D$ is a doubly even self-dual code of length $40$
and minimum weight $4$ (resp.\ $8$), then $L_{\text{C}}(D)$ is an
even unimodular lattice with minimum norm $2$ (resp.\ $4$)~\cite{KKM}.
In addition, the following result is an important tool
in this section.

\begin{lem}[{\cite[Theorem 3]{KKM}}]
\label{lem:KKM3}
For {\rm U} $=$ {\rm A, B, C},
a mapping
$D \mapsto L_{\text{\rm U}}(D)$ gives a one-to-one
correspondence between equivalence classes of
doubly even codes $D$ of length $n$
and isomorphism classes of even lattices in dimension $n$
with related frames of Type {\rm U},
if $n >16$  for {\rm U $=$ B}
and if $n >32$ for  {\rm U $=$ C}.
\end{lem}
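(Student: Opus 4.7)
The plan is to construct an inverse $L \mapsto D_{\mathrm{U}}(L)$ to each of the three maps $D \mapsto L_{\mathrm{U}}(D)$ and then to verify mutual invertibility, first at the level of lattice-with-frame pairs and afterwards at the level of equivalence classes. Given an even lattice $L$ in dimension $n$ together with a frame $(e_1,\ldots,e_n)$ of type U, set $\Lambda = \sum_{i=1}^n \ZZ e_i$, so that $\Lambda^{*} = \tfrac12\Lambda$. In type A one has $\Lambda \subset L \subset \Lambda^{*}$ by (\ref{eq:CA}) and integrality of $L$; in types B and C one has $\Lambda_0 \subset L$ (from (\ref{eq:C1})) together with $L \subset \Lambda^{*}$ (type B) or $L \subset \tfrac12 \Lambda + \tfrac14 \ZZ e_{\allone}$ (type C). In every case each $\ell \in L$ admits an expression $\tfrac12 \sum x_i e_i$, possibly plus a $\tfrac14 e_{\allone}$-term in type C, with $x_i \in \ZZ$; reducing $(x_i)$ modulo $2$ yields a linear map $L \to \FF_2^n$ whose image is declared to be $D_{\mathrm{U}}(L)$. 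Since $\|\tfrac12 e_x\|^{2} = \tfrac12 \wt(x)$ must be an even integer whenever $\tfrac12 e_x$ represents a class in $L$, the code $D_{\mathrm{U}}(L)$ is automatically doubly even.

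The identity $D_{\mathrm{U}}(L_{\mathrm{U}}(D)) = D$ falls out of the definition of $L_{\mathrm{U}}(D)$ by reading off the coset representatives; the identity $L_{\mathrm{U}}(D_{\mathrm{U}}(L)) = L$ holds because the chosen frame $(e_i)$ sits in $L$ in precisely the configuration the construction prescribes, so taking all coset representatives indexed by $D_{\mathrm{U}}(L)$ recovers $L$ exactly. This settles the bijection on pairs (lattice, frame).

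Passing to equivalence classes is the main obstacle. An isomorphism $\phi\colon L_{\mathrm{U}}(D)\to L_{\mathrm{U}}(D')$ need not carry the standard frame to the standard frame, but does send it to some frame of type U in the target. What one must prove is that any two frames of type U in a single even lattice differ by a signed permutation of the coordinate indices, which manifestly translates into a coordinate permutation between the codes extracted from them. I would approach this by describing the type-U frames inside $L_{\mathrm{U}}(D)$ as distinguished subsets of its norm-$2$ vectors satisfying (\ref{eq:C1}) together with the relevant one of (\ref{eq:CA})--(\ref{eq:CC}). For type A this description is unconditional; for type B there exist when $n\le 16$ stray orthogonal configurations arising from short vectors in $\Lambda_0^{*}/\Lambda_0$; and for type C the extra coset involving $\tfrac14 e_{\allone}$ contributes additional norm-$2$ vectors when $n\le 32$ that spawn spurious frames, forcing the thresholds $n>16$ and $n>32$.

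The hard part is precisely this frame-uniqueness analysis in types B and C: one must classify all $n$-tuples of pairwise orthogonal norm-$2$ vectors in $L_{\mathrm{U}}(D)$ that span the lattice in the manner dictated by the type and show they lie in a single $\Aut(L_{\mathrm{U}}(D))$-orbit once $n$ exceeds the stated threshold. The small-dimensional counterexamples (short-vector glue in dimension $16$ for type B, and the $\tfrac14 e_{\allone}$-coset for type C up to $n=32$) are what pin down the sharpness of the bounds.
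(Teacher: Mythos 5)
The paper offers no argument for this lemma at all: it is imported verbatim from \cite{KKM} (Theorem~3 there), so there is no internal proof to measure you against, and the only question is whether your sketch stands on its own. Its first half (the inverse map $L\mapsto D_{\mathrm U}(L)$ by reduction mod $2$, and the two composite identities) is routine and essentially fine, though even there details are glossed: $\tfrac12 e_x$ itself need not lie in $L$, so double-evenness must be checked on a representative $\tfrac12 e_x+\lambda$ using $(e_x,\lambda)\in 2\ZZ$ and $(\lambda,\lambda)\in 2\ZZ$; for type C the containment $L\subset\tfrac12\Lambda+\tfrac14\ZZ e_{\allone}$ needs the observation $L\subset L^*\subset\Lambda_0^*$ rather than being read off (\ref{eq:CC}); and the well-definedness of $D\mapsto L_{\mathrm U}(D)$ on equivalence classes, though easy, is never stated.

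The genuine gap is the step you yourself label ``the hard part'' and then do not carry out. Injectivity on isomorphism classes is precisely the claim that any two frames of Type U related to the same even lattice yield equivalent codes (for $n>16$ in type B, $n>32$ in type C, and with no restriction in type A, which also requires an argument you omit). Your text oscillates between two different formulations of this — first the much stronger assertion that any two type-U frames coincide up to signs and a permutation, then the weaker assertion that they lie in a single $\Aut(L_{\mathrm U}(D))$-orbit — and proves neither: there is no analysis of the norm-$2$ vectors of $L^*$ or of the relevant cosets, no argument that above the stated thresholds no other orthogonal norm-$2$ configuration satisfying (\ref{eq:C1}) together with (\ref{eq:CA}), (\ref{eq:CB}) or (\ref{eq:CC}) can arise, and the remarks about ``stray orthogonal configurations'' in dimension $\le 16$ and the $\tfrac14 e_{\allone}$ coset up to $n=32$ are assertions about sharpness, which is not even what the lemma needs. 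Since this uniqueness-of-frame analysis is the entire content of the theorem (and is exactly where the bounds $n>16$, $n>32$ enter), what you have written is a plan for a proof rather than a proof; to complete it you would have to reproduce the frame classification of \cite{KKM}, not merely point at it.
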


As an example of the above lemma, we directly
have the following
result from the
classification of doubly even self-dual codes
of length $40$ in~\cite{BHM40}.

\begin{rem}\label{rem:E}
There are $94343$ non-isomorphic extremal even unimodular lattices
in dimension $40$  with related frames of Type {\rm C},
$16470$ of which are extremal.
\end{rem}

\subsection{Theta series}

\begin{lem}\label{lem:L1}
Let $L$ be an odd unimodular lattice in dimension
$n \equiv 0 \pmod 4$.
Suppose that $L$ and its shadow
$S$ have minimum norms $\ge 4$ and $2$, respectively.
Then all vectors of norm $2$ in $S$
are contained in one of $L_1$ and $L_3$.
\end{lem}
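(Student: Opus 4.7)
The plan is to mirror the code-setting proof of Lemma~\ref{lem:1}. Suppose for contradiction that $S$ contains vectors $x\in L_1$ and $y\in L_3$ with $(x,x)=(y,y)=2$. First observe that both $x+y$ and $x-y$ lie in $L$: indeed $[L_0^*:L]=2$ with $S=L_0^*\setminus L$ the nontrivial coset, so $S+S\subset L$, and since $S$ is a union of cosets of $L_0$ it is closed under negation, giving $S-S\subset L$ as well. The parallelogram identity then reads
\[
(x+y,x+y)+(x-y,x-y)=2(x,x)+2(y,y)=8.
\]

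Next I would pin down the cosets of $L_0$ containing $x\pm y$. The quotient $L_0^*/L_0$ has order $4$ and contains $L/L_0\cong\ZZ/2$, so it is either $(\ZZ/2)^2$ or $\ZZ/4$. For $n\equiv 0\pmod 8$ (in particular $n=40$), the existence of the even unimodular neighbors $L_0\cup L_1$ and $L_0\cup L_3$ recalled in Section~\ref{Sec:Def} forces the Klein four structure, yielding $-L_1=L_1$, $-L_3=L_3$ and $L_1+L_3\subset L_2$; hence both $x+y$ and $x-y$ lie in $L_2$. The decisive observation is that $L_0$ is precisely the sublattice of even-norm vectors of $L$, so every nonzero vector of $L_2=L\setminus L_0$ has odd norm and thus norm at least $5$. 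Therefore, were both $x+y$ and $x-y$ nonzero, their norms would sum to at least $10>8$, contradicting the parallelogram identity; and $x+y=0$ would force $y=-x\in -L_1=L_1$, contradicting $y\in L_3$, with $x-y=0$ ruled out in the same way.

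For $n\equiv 4\pmod 8$, where the structure of $L_0^*/L_0$ is the cyclic one $\ZZ/4$, the argument is shorter and purely parity-based: $x+y\in L_1+L_3=L_0$ has even norm while $x-y\in L_1+L_1=L_2$ has odd norm, so their norms cannot sum to the even number $8$. The only genuinely delicate point in the whole argument is the bookkeeping for the two possible structures of $L_0^*/L_0$ needed to locate $x\pm y$ within the partition $L=L_0\cup L_2$; once that is settled, the parity split between even-norm and odd-norm vectors in $L$ does the rest.
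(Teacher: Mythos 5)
Your core argument is sound and genuinely different from the paper's. The paper also reduces to two norm-$2$ vectors $x\in L_1$, $y\in L_3$ and notes $(x-y,x-y)=4-2(x,y)\ge 4$, but it then invokes \cite[Lemma 1]{DHS} (Klein-four structure of $L_0^*/L_0$, giving $-x\in L_1$) together with \cite[Lemma 2]{DHS}, which says $(x,y)\in\frac{1}{2}+\ZZ$ whenever $x\in L_1$ and $y\in L_3$, to get a contradiction. You instead use the parallelogram identity and the observation that every nonzero vector of $L_2=L\setminus L_0$ has odd norm, hence norm at least $5$; this avoids the half-integrality lemma at the cost of needing the location of $x\pm y$ in the coset decomposition, and it is a perfectly good alternative where your structural input is correct.

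The genuine gap is your case $n\equiv 4\pmod 8$: the claim that $L_0^*/L_0$ is then cyclic of order $4$ is false. For an odd unimodular lattice the quotient $L_0^*/L_0$ is the Klein four group for every even dimension, in particular for all $n\equiv 0\pmod 4$; this is exactly what \cite[Lemma 1]{DHS} provides and why the paper cites it. The cyclic case occurs only in odd dimensions. A concrete counterexample to your claim is $L=\ZZ^4$ (or $\ZZ^{12}$), whose even sublattice is $D_4$ (resp.\ $D_{12}$) with $D_n^*/D_n\cong(\ZZ/2)^2$ for $n$ even. One can also see it intrinsically: if $[L_1]$ had order $4$, then $2[L_1]$ would be the unique element of order $2$, namely $[L_2]$, so $2x\in L_2$ for $x\in L_1$; but for $n\equiv 0\pmod 4$ shadow norms are integers, so $(2x,2x)=4(x,x)$ is even, while every vector of $L_2$ has odd norm. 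Consequently the coset identities your second case rests on, $L_1+L_3\subset L_0$ and $x-y\in L_1+L_1\subset L_2$, are wrong (with the true Klein-four structure both $x+y$ and $x-y$ lie in $L_2$), and the parity contradiction drawn from them is unfounded. The repair is immediate: cite the Klein-four structure for all $n\equiv 0\pmod 4$ (your neighbor-based derivation only covers $8\mid n$, which does suffice for the paper's applications in dimensions $40$ and $32$) and then your first argument applies verbatim, with no case split at all. As written, however, the proof of the lemma in its stated generality fails for $n\equiv 4\pmod 8$.
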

\begin{proof}
Let $x,y$ be distinct vectors of norm $2$ in $S$.
Since $x-y \in L$,
\begin{equation}\label{eq:ip}
(x-y,x-y)=4-2(x,y)\ge 4.
\end{equation}
Suppose that $x \in L_1$.
Since
$L_0^* /L_0$ is isomorphic to the Klein 4-group
(see e.g.~\cite[Lemma 1]{DHS}), $-x \in L_1$.
Hence, we may assume without loss of generality that $(x,y)\ge 0$.
If $y \in L_3$, then,
by~\cite[Lemma 2]{DHS}, $(x,y) \in \frac{1}{2}+\ZZ$.
This contradicts (\ref{eq:ip}).
\end{proof}

Conway and Sloane~\cite{CS-odd} show that
when the theta series of an odd unimodular lattice $L$
in dimension $n$ is written as
\begin{equation}
\label{Eq:theta}
\theta_L(q)=
 \sum_{j =0}^{\lfloor n/8\rfloor} a_j\theta_3(q)^{n-8j}\Delta_8(q)^j,
\end{equation}
the theta series of the shadow $S$ is written as
\begin{equation}
\label{Eq:theta-S}
\theta_S(q)= \sum_{j=0}^{\lfloor n/8\rfloor}
\frac{(-1)^j}{16^j} a_j\theta_2(q)^{n-8j}\theta_4(q^2)^{8j},
\end{equation}
where
$\Delta_8(q) = q \prod_{m=1}^{\infty} (1 - q^{2m-1})^8(1-q^{4m})^8$
and $\theta_2(q), \theta_3(q)$ and $\theta_4(q)$ are the Jacobi
theta series~\cite{SPLAG}.
In the case $n=40$ and minimum norm $\min(L)=4$,
$a_0, \ldots, a_3$ in (\ref{Eq:theta})
are determined as follows:
\[
a_0=1, a_1=-80, a_2=1360, a_3=-2560.
\]
In this case, it follows that
\[
\theta_S(q)=
\frac{-a_5}{2^{20}}
+ \Big(\frac{a_4}{2^8} + \frac{5a_5}{2^{16}}\Big) q^2
+ \cdots.
\]
Hence, $a_5=0$ and $a_4$ is divisible by $2^8$,
so we put $a_4=2^8\alpha$, where $\alpha$ is an integer.
Then we have the possible theta series
$\theta_{40,L,\alpha}$ and
$\theta_{40,S,\alpha}$ of an extremal odd unimodular lattice
$L$ and its shadow $S$:
\begin{align*}
\theta_{40,L,\alpha} &=
1
+ (19120  + 256 \alpha) q^4
+ (1376256 - 4096 \alpha) q^5 + \cdots,
\\
\theta_{40,S,\alpha} &=
\alpha q^2
+ (40960 - 56 \alpha) q^4
+ (87818240 + 1500 \alpha) q^6 +\cdots,
\end{align*}
respectively.
Moreover, we have the following restriction on $\alpha$.

\begin{lem}\label{lem:80}
$\alpha$ is even with $0 \le \alpha \le 80$.
\end{lem}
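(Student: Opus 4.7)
The plan is to read off $\alpha$ as the number of norm-$2$ vectors in the shadow $S$ (the $q^2$-coefficient of $\theta_{40,S,\alpha}$), and then establish the three assertions in turn: non-negativity is immediate, evenness follows from the symmetry $x \mapsto -x$ on $S$, and the upper bound $\alpha\le 80$ comes from an orthogonality argument made possible by Lemma~\ref{lem:L1}.

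First I would observe that $S=L_0^*\setminus L$ is preserved by $x\mapsto -x$ (both $L_0^*$ and $L$ are), and no norm-$2$ vector is its own negative. Hence the norm-$2$ vectors in $S$ partition into pairs $\{x,-x\}$, so $\alpha$ is even and in particular $\alpha\ge 0$. This disposes of the case $\alpha=0$, so from now on I may assume $\alpha>0$, which means $S$ has minimum norm $2$. Since $L$ is extremal, $\min(L)=4$, so the hypotheses of Lemma~\ref{lem:L1} are met.

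Lemma~\ref{lem:L1} then forces every norm-$2$ vector of $S$ into one of the two cosets $L_1,L_3$; without loss of generality all of them lie in $L_1$. For any two norm-$2$ vectors $x,y\in L_1$, coset arithmetic in $L_0^*/L_0\cong(\ZZ/2)^2$ gives $x+y\in L_0$ and $x-y\in L_0$, both contained in $L$. The minimum-norm condition on $L$ then yields, whenever $x\pm y\neq 0$,
\[
4\;\le\;(x\pm y,x\pm y)\;=\;4\pm 2(x,y).
\]
Applied to both signs, this shows that if $x\neq \pm y$ then $(x,y)=0$. Consequently the $\alpha/2$ pairs $\{x,-x\}$ of norm-$2$ vectors in $L_1$ provide $\alpha/2$ pairwise orthogonal nonzero vectors in $\RR^{40}$, which must be linearly independent. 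Therefore $\alpha/2\le 40$, i.e.\ $\alpha\le 80$.

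The only real content is the coset analysis: the main step is using Lemma~\ref{lem:L1} to confine all norm-$2$ shadow vectors to a single coset so that their pairwise sums and differences both land in $L$, after which the extremality bound $\min(L)\ge 4$ forces mutual orthogonality and the dimension cap $40$ finishes the argument. Everything else is a direct reading of the $q^2$-coefficient of $\theta_{40,S,\alpha}$ and the symmetry of $S$ under negation.
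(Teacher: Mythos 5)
Your proof is correct and follows essentially the same route as the paper: confine the norm-$2$ shadow vectors to $L_1$ via Lemma~\ref{lem:L1}, use negation symmetry for evenness, deduce pairwise orthogonality from $\min(L)\ge 4$ (your use of both $x+y,x-y\in L_0$ is just the paper's ``replace $y$ by $-y$'' step in disguise), and cap the count by the dimension $40$. No substantive differences.
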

\begin{proof}
Denote by $S_2$ the set of all vectors of norm $2$ in $S$.
By Lemma~\ref{lem:L1}, we may assume without loss of generality
that all vectors of $S_2$
are contained in $L_1$.
Let $x,y$ be vectors of norm $2$ in $L_1$ such that
$x \ne y$ and $x \ne -y$.
If $y \in L_1$ then $-y \in L_1$.
Hence, $\alpha$ is even and
we may assume that $(x,y)\ge 0$.
It follows from (\ref{eq:ip}) that $(x,y)=0$.
The set $S_2$ is written as
\[
S_2=T \cup (-T)
\]
satisfying that
$(x,y)=0$ for
$x,y \in T$ with $x\ne y$.
Hence, $\# T \le 40$, and
$L_1$ has at most $80$ vectors
of norm $2$.
Therefore, $\alpha \le 80$.
\end{proof}

\begin{lem}\label{lem:L1L3}
Let $L$ be an extremal odd unimodular lattice in dimension
$40$ with theta series $\theta_{40,L,\alpha}$.
Then one of $L_0 \cup L_1$ and $L_0 \cup L_3$ is an
extremal even unimodular lattice
and the remaining one is an
even unimodular lattice containing $\alpha$ vectors of norm $2$.
\end{lem}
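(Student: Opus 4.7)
The plan is to mirror the proof of Lemma~\ref{lem:1}, replacing doubly even self-dual codes by even unimodular lattices and weights by norms. From the discussion of neighbors in the Preliminaries, since $L$ is an odd unimodular lattice in dimension $40$ (which is divisible by $8$), the two neighbors $L_0 \cup L_1$ and $L_0 \cup L_3$ are both even unimodular lattices. Because $L$ is extremal, $\min(L)=4$, and hence its sublattice $L_0$ also has minimum norm at least $4$ and contributes no vector of norm $2$ to either neighbor.

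The case $\alpha = 0$ is then immediate: neither $L_1$ nor $L_3$ contains a vector of norm $2$, so both neighbors are extremal even unimodular lattices, and the statement holds with either one playing the role of the ``remaining'' lattice (which then has $0 = \alpha$ vectors of norm $2$). Note also that by Lemma~\ref{lem:80} the value $\alpha = 1$ cannot occur, so the only remaining case is $\alpha \geq 2$.

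For $\alpha \geq 2$, I would invoke Lemma~\ref{lem:L1}: all $\alpha$ vectors of norm $2$ in the shadow $S = L_1 \cup L_3$ lie in exactly one of the cosets $L_1$ and $L_3$. Without loss of generality, assume they all lie in $L_1$. Then $L_0 \cup L_3$ contains no vector of norm $2$ and is therefore an extremal even unimodular lattice, whereas $L_0 \cup L_1$ is an even unimodular lattice containing exactly $\alpha$ vectors of norm $2$, as required. The only a priori obstacle is ruling out a ``mixed'' configuration with some norm-$2$ shadow vectors in $L_1$ and others in $L_3$, but that is precisely what Lemma~\ref{lem:L1} excludes, so no additional argument is needed.
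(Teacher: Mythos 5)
Your proof is correct and follows essentially the same route as the paper, whose proof is simply ``Follows from Lemma~\ref{lem:L1} and $\theta_{40,S,\alpha}$'': you use Lemma~\ref{lem:L1} to place all $\alpha$ norm-$2$ shadow vectors in one coset and the shadow theta series to count them, with the neighbor facts from the Preliminaries supplying evenness and unimodularity. The separate treatment of $\alpha=0$ and the appeal to Lemma~\ref{lem:80} to exclude $\alpha=1$ are harmless extra detail, not a different method.
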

\begin{proof}
Follows from Lemma~\ref{lem:L1} and $\theta_{40,S,\alpha}$.
\end{proof}

\subsection{Lattices with theta series $\theta_{40,L,80}$}
Now we suppose that $L$ is an extremal odd unimodular lattice
in dimension $40$ with shadow $S$ having exactly $80$
vectors of norm $2$,
that is, $L$ and $S$ have the following
theta series:
\begin{align*}
\theta_{40,L,80} &=1 + 39600 q^4 + 1048576 q^5 + \cdots,
\\
\theta_{40,S,80} &=80 q^2 + 36480 q^4 + 87938240 q^6 +\cdots,
\end{align*}
respectively.

\begin{lem}\label{lem:L0}
There is a frame of Type {\rm B} related to
the even sublattice $L_0$.
\end{lem}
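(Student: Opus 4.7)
The plan is to extract the frame directly from the $80$ shadow vectors of norm $2$. By Lemma~\ref{lem:L1}, after possibly interchanging the roles of $L_1$ and $L_3$, we may assume that all $80$ vectors of norm $2$ in $S$ lie in $L_1$. The proof of Lemma~\ref{lem:80} already shows that this set $S_2$ can be written as $T \cup (-T)$ with $|T|=40$, and that the vectors in $T$ are pairwise orthogonal of norm $2$. Label the vectors of $T$ as $e_1,\dots,e_{40}$; then $(e_i,e_j)=2\delta_{ij}$ holds by construction.

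Next I would verify the remaining half of (\ref{eq:C1}) and the non-containment half of (\ref{eq:CB}). Since $L_0^*/L_0 \cong (\mathbb{Z}/2\mathbb{Z})^2$ and $L_1$ is a non-trivial element of order $2$ in this quotient, we have $L_1+L_1 = L_0$. As each $e_i$ and $-e_j$ belongs to $L_1$, the combinations $e_i+e_j$ and $e_i-e_j$ lie in $L_0$ for all $i,j$. In particular $e_i \in L_1$ shows $e_i \notin L_0$ for every $i$, giving the first half of (\ref{eq:CB}).

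It remains to prove the containment $\frac{1}{2}\sum_{i=1}^{40}\mathbb{Z}e_i \supset L_0$. Put $\Lambda = \sum_{i=1}^{40}\mathbb{Z}e_i$. Because $e_1,\dots,e_{40}$ are pairwise orthogonal of norm $2$, the Gram matrix equals $2I_{40}$, so the dual basis is $\{e_i/2\}$ and hence $\Lambda^{*}=\tfrac{1}{2}\Lambda$. Thus the required containment is equivalent to $L_0 \subset \Lambda^{*}$, that is, $(v,e_i)\in\mathbb{Z}$ for every $v\in L_0$ and every $i$. But this is immediate: $e_i\in L_1\subset L_0^{*}$ by definition of the shadow, so $(v,e_i)\in\mathbb{Z}$ whenever $v\in L_0$.

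Combining these three steps shows that $\{e_1,\dots,e_{40}\}$ satisfies (\ref{eq:C1}) and (\ref{eq:CB}), i.e.\ it is a frame of Type B related to $L_0$. There is no real obstacle here; the proof is essentially bookkeeping once one recalls that the dual of $\sqrt{2}\,\mathbb{Z}^{40}$ is $\tfrac{1}{\sqrt{2}}\,\mathbb{Z}^{40}$ and that norm-$2$ vectors of $S$ automatically sit in $L_0^{*}$.
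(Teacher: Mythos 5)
Your argument is correct and is essentially the paper's proof: both extract $T=\{e_1,\dots,e_{40}\}$ from the $80$ shadow vectors of norm $2$ (via the proof of Lemma~\ref{lem:80}) and then obtain the containment $\frac{1}{2}\sum_{i}\ZZ e_i \supset L_0$ by the duality $\bigl(\sum_i \ZZ e_i\bigr)^{*}=\frac{1}{2}\sum_i \ZZ e_i$ together with $e_i \in L_0^{*}$. You merely spell out explicitly the steps the paper leaves implicit (that $e_i\pm e_j\in L_0$ via the coset structure of $L_0^{*}/L_0$ and that $e_i\notin L_0$), so there is no substantive difference.
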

\begin{proof}
By the proof of Lemma~\ref{lem:80},
the set of all vectors of norm $2$ in $S$
may be written as
$T \cup (-T)$,
where $T=\{e_1,e_2,\ldots,e_{40}\}$ satisfying
the condition (\ref{eq:C1}).
Then
\[
\Big(\frac{1}{2} \sum_{i=1}^{40} \ZZ e_i\Big)^*
= \sum_{i=1}^{40} \ZZ e_i
\subset L \cup S= L_0^*.
\]
Hence, $e_1,e_2,\ldots,e_{40}$ satisfy
the condition (\ref{eq:CB}).
\end{proof}

We are in a position to state and prove the main result of
this section.

\begin{thm}\label{thm:odd}
There are $16470$ non-isomorphic extremal odd unimodular
lattices in dimension $40$
with theta series $\theta_{40,L,80}$.
\end{thm}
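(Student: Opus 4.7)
The plan is to establish a bijection between the isomorphism classes of extremal odd unimodular lattices $L$ in dimension $40$ with theta series $\theta_{40,L,80}$ and the equivalence classes of extremal doubly even self-dual codes $D$ of length $40$. The value $16470$ then follows from the classification of the latter in~\cite{BHM40}.

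First, to define the forward map $L\mapsto D$, I would use Lemma~\ref{lem:L0}: the even sublattice $L_0$ of $L$ carries a Type B frame, obtained by writing the $80$ shadow vectors of norm $2$ as $\pm e_1,\ldots,\pm e_{40}$. By Lemma~\ref{lem:KKM3} with U $=$ B (applicable since $40>16$), there is a doubly even code $D$ of length $40$, unique up to equivalence, with $L_0\cong L_{\text{B}}(D)$. A short discriminant count shows $D$ is self-dual: since $\det\Lambda_0=2^{42}$ and $[L_{\text{B}}(D):\Lambda_0]=|D|$, one gets $\det L_{\text{B}}(D)=2^{42-2\dim D}$, while $[L:L_0]=2$ combined with unimodularity of $L$ forces $\det L_0=4$, so $\dim D=20$. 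To see $D$ is extremal, invoke Lemma~\ref{lem:L1L3}: one of $L_0\cup L_1$ and $L_0\cup L_3$ is extremal. The coset $L_0+e_1$ combines with $L_0$ to give $L_{\text{A}}(D)$ and contains the norm-$2$ vectors $e_1,\ldots,e_{40}$, so it is the non-extremal even neighbor. Since $\varepsilon=40/8\equiv 1\pmod 2$, the other even neighbor is $L_{\text{C}}(D)=L_0\cup(L_0+e_1+\tfrac14 e_{\allone})$, whose extremality forces $D$ to have minimum weight $\geq 8$.

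For the inverse map, given an extremal doubly even self-dual code $D$ of length $40$, I would set
\[
L=L_{\text{B}}(D)\cup\bigl(L_{\text{B}}(D)+\tfrac14 e_{\allone}\bigr).
\]
This is a lattice because $\allone\in D$ (every binary self-dual code contains $\allone$, and $D$ is self-dual), hence $\tfrac12 e_{\allone}\in L_{\text{B}}(D)$. It has determinant $1$, is odd because $\tfrac14 e_{\allone}$ has norm $5$, and has minimum norm $4$: indeed $\min L_{\text{B}}(D)=4$ since $D$ is extremal, and a coordinate-by-coordinate minimization shows every vector of $L_{\text{B}}(D)+\tfrac14 e_{\allone}$ has coordinates in $\tfrac14+\tfrac12\ZZ$, giving norm at least $40\cdot 2\cdot (1/4)^2=5$. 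The shadow of $L$ is $(L_{\text{B}}(D)+e_1)\cup(L_{\text{B}}(D)+e_1+\tfrac14 e_{\allone})$; the second coset again has minimum norm $\geq 5$ since its coordinates also lie in $\tfrac14+\tfrac12\ZZ$, and in the first coset the only norm-$2$ vectors are $\pm e_j$ for $j=1,\ldots,40$, because any other would come from a weight-$4$ codeword of $D$, of which there are none. Thus $L$ is extremal with theta series $\theta_{40,L,80}$. A parity-of-norm check (vectors in $L_0+v$ have norm parity $(v,v)\bmod 2$) identifies $L_0+\tfrac14 e_{\allone}$ as the unique odd coset of $L_0$ in $L_0^*$, so this construction inverts the forward map.

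The bijection then reduces the count to the $16470$ equivalence classes of extremal doubly even self-dual codes of length $40$ established in~\cite{BHM40}. The main obstacle is the norm computation in the converse direction: verifying that $L_{\text{B}}(D)+\tfrac14 e_{\allone}$ has minimum norm exactly $5$ (ensuring extremality of $L$) and that $L_{\text{B}}(D)+e_1$ acquires no norm-$2$ vectors beyond the $\pm e_j$. Both reduce to elementary bookkeeping using $L_{\text{B}}(D)=\bigcup_{x\in D}(\Lambda_0+e_x/2)$ together with the fact that codewords of $D$ have even weight.
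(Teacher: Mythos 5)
Your proposal is correct and takes essentially the same route as the paper: extract a frame of Type B on the even sublattice $L_0$ from the $80$ shadow vectors of norm $2$ (Lemma~\ref{lem:L0}), use the correspondence $D \mapsto L_{\text{B}}(D)$ of Lemma~\ref{lem:KKM3} to identify $L_0$ with $L_{\text{B}}(D)$ for an extremal doubly even self-dual code $D$, and count via the $16470$ such codes of~\cite{BHM40}. The differences are only in the level of detail: your discriminant count for self-duality and the neighbor-based argument for extremality replace the paper's shorter direct arguments, and your explicit inverse construction $L=L_{\text{B}}(D)\cup\bigl(L_{\text{B}}(D)+\tfrac{1}{4}e_{\allone}\bigr)$ together with the uniqueness of the odd coset spells out what the paper leaves implicit in its appeal to Lemma~\ref{lem:KKM3}.
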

\begin{proof}
Let $L$ be an extremal odd unimodular lattice
in dimension $40$ with theta series $\theta_{40,L,80}$.
By Lemma~\ref{lem:L0},
there is a frame of Type B related to
the even sublattice $L_0$.
Hence, by Theorem 1 in~\cite{KKM}, there is a doubly even
code $C$ of length $40$ such that
$L_0 \cong L_{\text{B}}(C)$.
Since $L_{\text{B}}(C)$ is a sublattice of index $2$
of an even unimodular lattice $L_{\text{A}}(C)$,
$C$ must be self-dual.
Since $L_{\text{B}}(C)$ has minimum norm $4$,
$C$ has minimum weight $8$, that is, extremal.

Moreover, by Lemma~\ref{lem:KKM3}, a mapping
$C \mapsto L_{\text{B}}(C)$ gives a one-to-one
correspondence between equivalence classes of
extremal doubly even self-dual codes $C$ of length $40$
and isomorphism classes of even sublattices of
extremal odd unimodular lattices in
dimension $40$ with theta series $\theta_{40,L,80}$.
There are $16470$ inequivalent
extremal doubly even self-dual codes of length $40$~\cite{BHM40}.
The result follows.
\end{proof}

\begin{rem}
A similar argument can be found in~\cite{CS-odd}
for dimension $n=32$.
There is a one-to-one
correspondence between equivalence classes of
extremal doubly even self-dual codes of length $32$
and isomorphism classes of even sublattices of
extremal odd unimodular lattices in
dimension $32$ with shadows having exactly $64$
vectors of norm $2$.
In this dimension, the shadow of
any extremal odd unimodular lattice
has $64$ vectors of norm $2$.
By Lemma~\ref{lem:L1},
the $64$ vectors of norm $2$ of the shadow
are contained in one of $L_1$ and $L_3$.
It is incorrectly reported
in~\cite[p.~360]{CS-odd} that each $L_i$ has
$32$ vectors of norm $2$ $(i=1,3)$.
\end{rem}

Now we give characterizations of the
set of vectors of minimum norm in
extremal odd unimodular lattices
with theta series $\theta_{40,L,80}$.
By the above theorem,
the even sublattice of such a lattice
is written by $L_{\text{B}}(C)$
using some extremal doubly even self-dual code $C$ of length $40$.

\begin{prop}
The set of vectors of norm $4$ in $L_{\text{\rm B}}(C)$
is given by
\[
L_{\text{\rm B}}(C)_4=
\{\pm e_i \pm e_j \mid i \ne j\}
\cup
\Big(\bigcup_{x \in C_8}
\Big\{\frac{1}{2}e_x - \sum_{y \in S}e_y\mid
S \subset \supp(x), \#S \in 2\ZZ
\Big\}
\Big),
\]
where $C_8$ is the set of codewords of weight $8$.
\end{prop}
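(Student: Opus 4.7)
The plan is to compute, coset by coset, the norm-$4$ vectors of $L_{\text{B}}(C)=\bigcup_{x \in C}(\Lambda_0+\tfrac12 e_x)$. An arbitrary element $v+\tfrac12 e_x$ with $v=\sum_i a_i e_i\in\Lambda_0$ can be rewritten as $\sum_i b_i e_i$, where $b_i=a_i+\tfrac12$ for $i\in\supp(x)$ and $b_i=a_i$ otherwise. Since the $e_i$ are orthogonal of norm $2$, the squared norm is $2\sum_i b_i^2$, so the condition is $\sum_i b_i^2=2$. Note the constraints: $b_i\in\ZZ$ off $\supp(x)$, $b_i\in\tfrac12+\ZZ$ on $\supp(x)$ (so $b_i^2\geq\tfrac14$ there), and $\sum_i a_i\equiv 0\pmod 2$ coming from $v\in\Lambda_0$.

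First I would dispose of the coset $x=0$. Here all $b_i=a_i$ are integers, so $\sum b_i^2=2$ forces exactly two coordinates with $b_i=\pm 1$ and the rest zero; the parity condition $\sum a_i\equiv 0\pmod 2$ is automatic, producing precisely the vectors $\pm e_i\pm e_j$ with $i\neq j$.

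Next I would handle $x\neq 0$. Because $C$ is an extremal doubly even self-dual code of length $40$, $\wt(x)\geq 8$. The estimate $\sum_{i\in\supp(x)}b_i^2\geq \wt(x)/4$ combined with $\sum_i b_i^2=2$ forces $\wt(x)\leq 8$, hence $\wt(x)=8$, i.e.\ $x\in C_8$. Equality throughout then forces $b_i=\pm\tfrac12$ for $i\in\supp(x)$ and $b_i=0$ for $i\notin\supp(x)$. Translating back, $a_i=0$ off $\supp(x)$, while on $\supp(x)$ either $a_i=0$ (giving coefficient $+\tfrac12$) or $a_i=-1$ (giving coefficient $-\tfrac12$). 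Letting $S\subset\supp(x)$ be the set of coordinates with $a_i=-1$, the vector equals $\tfrac12 e_x-\sum_{y\in S}e_y$, and the condition $\sum_i a_i\equiv 0\pmod 2$ becomes $\#S\in 2\ZZ$. This is exactly the parametrization in the statement.

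Finally, I would check that every vector listed does lie in $L_{\text{B}}(C)$: the vectors $\pm e_i\pm e_j$ sit in $\Lambda_0$ (since the sum of coordinates is even), and for $x\in C_8$ and $S\subset\supp(x)$ with $\#S$ even, the vector $\tfrac12 e_x-\sum_{y\in S}e_y$ equals $(-\sum_{y\in S}e_y)+\tfrac12 e_x$ with $-\sum_{y\in S}e_y\in\Lambda_0$, confirming membership in the coset $\Lambda_0+\tfrac12 e_x$. There is no real obstacle here; the only delicate point is the bookkeeping that converts the $\Lambda_0$-parity condition on the $a_i$'s into the evenness of $\#S$, and the identification $\wt(x)\leq 8$ via the minimum-weight bound, which crucially uses extremality of $C$.
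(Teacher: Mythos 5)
Your proof is correct, but it proceeds quite differently from the paper's. You classify the norm-$4$ vectors directly, coset by coset: writing a vector of $\Lambda_0+\tfrac12 e_x$ in the orthogonal basis $e_1,\ldots,e_{40}$, using $b_i^2\ge\tfrac14$ on $\supp(x)$ to force $\wt(x)\in\{0,8\}$ via the minimum weight $8$ of the extremal code $C$, and then translating the $\Lambda_0$-parity into $\#S$ even. The paper instead gives a counting argument: it observes that every listed vector has norm $4$, counts the listed set as $2^2\binom{40}{2}+285\cdot 2^7=39600$ using $\#C_8=285$ (Mallows--Sloane), and notes that this equals the total number of norm-$4$ vectors of $L_{\text{B}}(C)$ known from the theta series $\theta_{40,L,80}$, so the inclusion must be an equality. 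Your route is more self-contained and elementary: it needs neither the weight enumerator of $C$ nor the theta series, and it automatically establishes that the listed vectors are distinct and lie in the lattice, points the paper leaves implicit. The paper's route is shorter on the page but outsources the real work to the known counts. One could even remark that your computation independently re-derives the count $39600=4\binom{40}{2}+285\cdot 2^7$ rather than consuming it. No gaps.
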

\begin{proof}
The norm of a vector of $L_{\text{B}}(C)_4$ is $4$.
Since $\#C_8 =285$~\cite{Mallows-Sloane},
it follows that
$\#L_{\text{B}}(C)_4=2^2 \binom{40}{2}+285 \cdot 2^7 =39600$,
which is the same as the number of vectors of norm $4$
in $L_{\text{\rm B}}(C)$.
\end{proof}

\begin{rem}
By Lemmas~\ref{lem:L1} and \ref{lem:L1L3},
the even unimodular neighbor
$L_0 \cup L_3$ is extremal.
Then $L_0 \cup L_3$ is written as
$L_{\text{C}}(C)$,
where $L_0=L_{\text{\rm B}}(C)$
and $C$ is
some extremal doubly even self-dual code $C$ of length $40$
(see Remark~\ref{rem:E}).
An even unimodular lattice in dimension $40$ has the
following theta series
$1 + 39600 q^4 + 93043200 q^6 + \cdots$.
By the above proposition,
the set of vectors of norm $4$ in $L_{\text{C}}(C)$
is also given by $L_{\text{B}}(C)_4$.
\end{rem}

In Proposition~\ref{prop:Cmin},
the set of codewords of minimum weight in
an extremal singly even self-dual code of length $40$
with weight enumerator $W_{40,C,10}$ has been characterized.
Similarly, we give a characterization of
the set of vectors of minimum norm in an extremal
odd unimodular lattices in dimension $40$
with theta series $\theta_{40,L,80}$.
It is known that
the set of vectors of each norm in an extremal
even unimodular lattice in dimension $40$ forms
a spherical $3$-design
(see~\cite{BB09} for a recent survey on these subjects).
By the above remark, we have the following:

\begin{prop}\label{prop:Lmin}
The set of vectors of minimum norm in an extremal
odd unimodular lattice in dimension $40$ with theta series
$\theta_{40,L,80}$ forms a spherical $3$-design.
\end{prop}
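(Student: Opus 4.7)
The plan is to transport the spherical-design property from the extremal even unimodular neighbor of $L$, whose minimum shell I will show coincides set-theoretically with the norm-$4$ shell of $L$.

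Let $L$ be an extremal odd unimodular lattice in dimension $40$ with theta series $\theta_{40,L,80}$, so $\min(L)=4$ and the shadow $S$ contains exactly $80$ vectors of norm $2$. By Lemma~\ref{lem:L1} I may assume without loss of generality that all $80$ vectors of norm $2$ in $S$ lie in the coset $L_1$. Lemma~\ref{lem:L1L3} then identifies $N:=L_0\cup L_3$ as an extremal even unimodular lattice in dimension $40$, whose theta series is $1+39600 q^4+93043200 q^6+\cdots$.

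Next I would identify the norm-$4$ shells of $L$ and $N$. Writing $L=L_0\cup L_2$ where $L_2$ consists of vectors of odd norm, every norm-$4$ vector of $L$ lies in $L_0$. The coefficient of $q^4$ in $\theta_{40,L,80}$ is $39600$, which equals the number of norm-$4$ vectors in $N$. Since the norm-$4$ vectors of $N=L_0\cup L_3$ split into those in $L_0$ and those in $L_3$, the equality of cardinalities forces $L_3$ to contain no vectors of norm $4$. Consequently the set of minimum-norm vectors of $L$ and the minimum shell of $N$ are literally the same subset of $\RR^{40}$. (This is also what the preceding remark expresses in terms of $L_{\text{B}}(C)$ and $L_{\text{C}}(C)$.)

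Finally I invoke the known theorem, recalled just before the proposition, that every nonzero norm shell of an extremal even unimodular lattice in dimension $40$ is a spherical $3$-design (see~\cite{BB09}). Applied to the minimum shell of $N$, this gives the asserted design structure on the minimum-norm vectors of $L$. The main conceptual step is the identification of the two shells; once the even neighbor is chosen so as to be extremal, the identification is forced by the $q^4$ coefficient of the theta series, and no further work is needed. The rest is a direct citation.
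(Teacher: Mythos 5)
Your proposal is correct and follows essentially the same route as the paper: one identifies the minimum shell of $L$ with that of its extremal even unimodular neighbor $L_0 \cup L_3$ (Lemmas~\ref{lem:L1} and~\ref{lem:L1L3}) and then cites the known fact that the nonempty shells of an extremal even unimodular lattice in dimension $40$ are spherical $3$-designs. Your counting step ($39600$ norm-$4$ vectors in each lattice, all of $L$'s lying in $L_0$, so $L_3$ contributes none) is the same identification the paper carries out through the explicit description of $L_{\text{B}}(C)_4$ in the preceding proposition and remark.
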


\section{Extremal self-dual $\ZZ_4$-codes
and extremal odd unimodular lattices}\label{Sec:Z4}

Let $\cC$ be a self-dual $\ZZ_4$-code of length $n$ and minimum
Euclidean weight $d_E(\cC)$.
Then the following lattice
\[
A_{4}(\cC) = \frac{1}{2}
\{(x_1,\ldots,x_n) \in \ZZ^n \mid
(x_1 \bmod 4,\ldots,x_n \bmod 4)\in \cC\}
\]
is a unimodular lattice in dimension $n$ having minimum
norm $\min\{4,d_E(\cC)/4\}$.
In addition, $\cC$ is Type II if and only if
$A_4(\cC)$ is even~\cite{Z4-BSBM}.
A set $\{f_1, \ldots, f_{n}\}$ of $n$ vectors $f_1, \ldots, f_{n}$ of a
unimodular lattice $L$ in dimension $n$ with
$(f_i, f_j) = 4 \delta_{ij}$
is called an {\em orthogonal frame of norm $4$}
(a $4$-frame for short) of $L$.
It is known that $L$ has an orthogonal frame of norm $4$ if and only
if there is a self-dual $\ZZ_4$-code $\cC$ with
$L \cong A_4(\cC)$.

\begin{lem}\label{lem:Z4}
Suppose that $n$ is even.
Let $L$ be an even (resp.\ odd) unimodular lattice in dimension $n$
such that there are vectors $e_1,e_2,\ldots,e_n$ satisfying
the condition that
$(e_i,e_j)=2\delta_{ij} \text{ and }
e_i\pm e_j \in L \ (1 \le i,j \le n)$ which is the same condition as
(\ref{eq:C1}).
Then $L$ contains an orthogonal frame of norm $4$ and
there is a Type~II (resp.\ Type~I) $\ZZ_4$-code $\cC$ of
length $n$ with $A_4(\cC) \cong L$.
\end{lem}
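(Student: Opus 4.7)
The plan is to build an orthogonal frame of norm $4$ inside $L$ by pairing up the given vectors $e_1,\ldots,e_n$, and then invoke the correspondence (stated just before the lemma) between $4$-frames in unimodular lattices and self-dual $\ZZ_4$-codes.

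Since $n$ is even, group the indices into consecutive pairs and, for $k=1,\ldots,n/2$, set
\[
f_{2k-1}=e_{2k-1}+e_{2k}, \qquad f_{2k}=e_{2k-1}-e_{2k}.
\]
Each $f_i$ lies in $L$ by the hypothesis $e_i\pm e_j\in L$. Using $(e_i,e_j)=2\delta_{ij}$ one checks directly that $(f_i,f_i)=4$ for all $i$; that $(f_{2k-1},f_{2k})=(e_{2k-1},e_{2k-1})-(e_{2k},e_{2k})=0$; and that for distinct pairs $k\ne\ell$ any inner product of the form $(e_{2k-1}\pm e_{2k},e_{2\ell-1}\pm e_{2\ell})$ vanishes term by term. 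Hence $\{f_1,\ldots,f_n\}$ is an orthogonal frame of norm $4$ in $L$.

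By the characterization recalled just above the lemma statement, the existence of such a $4$-frame in $L$ is equivalent to the existence of a self-dual $\ZZ_4$-code $\cC$ of length $n$ with $A_4(\cC)\cong L$, and this already gives $\cC$. The Type~II / Type~I alternative then follows from the fact quoted from~\cite{Z4-BSBM} that $\cC$ is Type~II if and only if $A_4(\cC)$ is even: if $L$ is even then $\cC$ is forced to be Type~II, and if $L$ is odd then $\cC$ is Type~I.

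There is essentially no obstacle to overcome; the only point requiring care is that the pairing $f_{2k-1},f_{2k}$ actually lands inside $L$, which is exactly why the hypothesis insists on $e_i\pm e_j\in L$ (not merely $e_i+e_j\in L$), and that the parity of $n$ is needed to pair all indices. Everything else is a routine bilinear computation followed by the cited equivalence between $4$-frames and self-dual $\ZZ_4$-codes.
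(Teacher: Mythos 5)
Your proof is correct and follows exactly the paper's argument: the paper likewise takes the frame $\{e_{2i-1}+e_{2i},\,e_{2i-1}-e_{2i} \mid i=1,\ldots,n/2\}$ and concludes via the correspondence between $4$-frames and self-dual $\ZZ_4$-codes, with the Type~II/Type~I distinction coming from the cited fact that $\cC$ is Type~II if and only if $A_4(\cC)$ is even. You have merely written out the routine inner-product checks that the paper leaves implicit.
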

\begin{proof}
The following set
\[
\{e_{2i-1}+e_{2i},e_{2i-1}-e_{2i} \mid i=1,2,\ldots,n/2\}
\]
is an orthogonal frame of norm $4$ of $L$.
The result follows.
\end{proof}

We consider the existence of
extremal self-dual $\ZZ_4$-codes.

\begin{prop}\label{prop:Z4}
There are at least  $16470$ inequivalent extremal Type~II $\ZZ_4$-codes
of length $40$.
There are at least  $16470$ inequivalent extremal Type~I $\ZZ_4$-codes
of length $40$.
\end{prop}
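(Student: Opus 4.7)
The plan is to apply Lemma~\ref{lem:Z4} to two families of extremal unimodular lattices already enumerated in the paper. Each non-isomorphic lattice satisfying condition~(\ref{eq:C1}) produces a self-dual $\ZZ_4$-code via the $A_4$-construction, and because equivalent $\ZZ_4$-codes are sent to isomorphic lattices under $A_4$, any count of non-isomorphic lattices descends to a lower bound on inequivalent $\ZZ_4$-codes.

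For the Type~II assertion, I would take each of the $16470$ inequivalent extremal doubly even self-dual codes $C$ of length $40$ from~\cite{BHM40} and form $L_{\text{C}}(C)$. Since $d(C)=8$, this is an extremal even unimodular lattice, and by Lemma~\ref{lem:KKM3} (applicable because $n=40>32$) distinct $C$ yield non-isomorphic lattices $L_{\text{C}}(C)$. The construction vectors $e_1,\ldots,e_{40}$ satisfy $(e_i,e_j)=2\delta_{ij}$, and $e_i\pm e_j\in\Lambda_0\subset L_{\text{C}}(C)$ for all $i,j$ (with $2e_i\in\Lambda_0$ in the diagonal case), so (\ref{eq:C1}) holds. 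Lemma~\ref{lem:Z4} then produces a Type~II $\ZZ_4$-code $\cC$ with $A_4(\cC)\cong L_{\text{C}}(C)$. From $\min A_4(\cC)=\min\{4,d_E(\cC)/4\}=4$ one deduces $d_E(\cC)\ge 16$, matching the upper bound $8\lfloor 40/24\rfloor+8=16$, so $\cC$ is extremal. This produces at least $16470$ pairwise inequivalent extremal Type~II $\ZZ_4$-codes.

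For the Type~I assertion, I would apply the same recipe to the $16470$ non-isomorphic extremal odd unimodular lattices $L$ supplied by Theorem~\ref{thm:odd}. From the proof of Lemma~\ref{lem:80}, the $80$ norm-$2$ vectors of the shadow of such an $L$ can be written as $T\cup(-T)$, where $T=\{e_1,\ldots,e_{40}\}\subset L_1$ is a pairwise orthogonal set of norm-$2$ vectors. Because $L_0^*/L_0$ is the Klein $4$-group one has $L_1+L_1\subset L_0$, whence $e_i\pm e_j\in L_0\subset L$ for all $i,j$ (with $2e_i\in L_0$), and (\ref{eq:C1}) is verified. Lemma~\ref{lem:Z4} then delivers a Type~I $\ZZ_4$-code $\cC$ with $A_4(\cC)\cong L$; extremality follows by the same norm computation as above, noting that $40\not\equiv 23\pmod{24}$ so the relevant upper bound is again $16$. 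Non-isomorphism of the lattices forces the codes to be pairwise inequivalent, yielding at least $16470$ extremal Type~I $\ZZ_4$-codes.

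There is no genuine obstacle: the argument is a clean assembly of Lemmas~\ref{lem:Z4} and~\ref{lem:KKM3}, Theorem~\ref{thm:odd}, and the classification of~\cite{BHM40}. The only point requiring a moment's thought is the verification of~(\ref{eq:C1}) in each setting, which is immediate from the definition of $L_{\text{C}}$ in the Type~II case and from the Klein-four structure of $L_0^*/L_0$ in the Type~I case.
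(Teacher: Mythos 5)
Your proposal is correct and follows essentially the same route as the paper: both cases apply Lemma~\ref{lem:Z4} to the $16470$ non-isomorphic extremal even unimodular lattices coming from the doubly even codes (Remark~\ref{rem:E}) and to the $16470$ extremal odd unimodular lattices of Theorem~\ref{thm:odd}, with inequivalence of the resulting $\ZZ_4$-codes forced by non-isomorphism of the lattices $A_4(\cC)\cong L$. Your explicit verifications of condition~(\ref{eq:C1}) (via $\Lambda_0\subset L_{\text{C}}(C)$ in the Type~II case and via the Klein-four structure $L_1+L_1\subset L_0$ in the Type~I case) and of extremality from $\min A_4(\cC)=4$ simply spell out what the paper leaves implicit in its references to the frames of Type~C and Type~B.
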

\begin{proof}
Since the two cases are similar, we give details only for Type~I
$\ZZ_4$-codes. By Theorem~\ref{thm:odd}, $16470$ non-isomorphic extremal
odd unimodular lattices $L_i$ $(i=1,2,\ldots,16470)$ in dimension
$40$ exist (see Remark~\ref{rem:E} for  extremal even unimodular
lattices). Moreover, for each $L_i$, there are vectors
$e_1,e_2,\ldots,e_{40}$ satisfying the condition that
$(e_j,e_k)=2\delta_{jk} \text{ and } e_j\pm e_k \in L_i \ (1 \le
j,k \le 40)$. By Lemma~\ref{lem:Z4}, there is a Type~I
$\ZZ_4$-code $\cC_i$ of length $40$ with $A_4(\cC_i) \cong L_i$
for $i=1,2,\ldots,16470$. Since $L_1, \ldots, L_{16470}$ are
non-isomorphic extremal odd unimodular lattices,
$\cC_1,\ldots,\cC_{16470}$ are inequivalent extremal Type~I
$\ZZ_4$-codes of length $40$.
\end{proof}

Using the method given in~\cite{Z4-PLF},
we found $16$ new extremal Type~I $\ZZ_4$-codes
$\cD_i$ ($i=1,2,\ldots,16$) of length $40$,
in order to give extremal odd unimodular lattices $A_4(\cD_i)$
with other theta series $\theta_{40,L,4k}$, where
$k =0,1,\ldots,14,16$.
Since a Type~I $\ZZ_4$-code $\cC$
of length $40$ is extremal if and only if $A_4(\cC)$ is extremal,
we verified that odd unimodular lattices $A_4(\cD_i)$ are extremal.
Also, the theta series of $A_4(\cD_i)$
were determined by obtaining
the numbers of vectors of minimum norm.
Then we have the following:

\begin{prop}
There is an extremal odd unimodular lattice
with theta series $\theta_{40,L,4k}$
for $k \in \{0,1,\ldots,14,16,20\}$.
\end{prop}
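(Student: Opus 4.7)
The plan is to exhibit an extremal odd unimodular lattice realizing each of the $17$ values of $k$ in the list. The case $k=20$ is already in hand: by Theorem~\ref{thm:odd}, any of the $16470$ lattices produced there has shadow with $80$ vectors of norm $2$, hence has $\alpha=80=4\cdot 20$ and theta series $\theta_{40,L,80}$. Thus it remains to find, for each $k \in \{0,1,\ldots,14,16\}$, one extremal odd unimodular lattice with $\alpha=4k$.

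For these $16$ remaining values my strategy is to construct an extremal Type~I $\ZZ_4$-code $\cD_k$ of length $40$ and set $L = A_4(\cD_k)$. The general properties of the $A_4$ construction recalled at the start of Section~\ref{Sec:Z4} give that $L$ is a unimodular lattice of dimension $40$, and it is odd because $\cD_k$ is Type~I (not Type~II). The relation $\min(L) = \min\{4, d_E(\cD_k)/4\}$ shows that $L$ is extremal (minimum norm~$4$) precisely when $\cD_k$ has minimum Euclidean weight $16$, which is the extremal bound $8\lfloor 40/24\rfloor+8$ for Type~I codes; so extremality of the code and extremality of the lattice are equivalent, as noted just above the statement. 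I would generate the candidate codes $\cD_k$ using the randomized / gluing construction method of~\cite{Z4-PLF} and certify extremality directly from the Euclidean weight enumerator.

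To decide which member $\theta_{40,L,4k}$ of the one-parameter family each $A_4(\cD_k)$ realizes, I would use the fact that the theta series is determined by $\alpha$ alone: the coefficient of $q^4$ in $\theta_{40,L,\alpha}$ is $19120+256\alpha$, so
\[
\alpha \;=\; \frac{\#\{v \in L : (v,v)=4\} - 19120}{256}.
\]
Equivalently, by Lemma~\ref{lem:L1} together with the counting argument in the proof of Lemma~\ref{lem:80}, $\alpha$ equals the number of norm-$2$ vectors of the shadow of $L$. Computing this single integer for each constructed lattice identifies its theta series and lets one sort the $\cD_k$ by their values of $\alpha$.

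The main obstacle is not theoretical but computational: for every target $\alpha \in \{0,4,8,\ldots,56,64\}$ one must actually find an extremal Type~I $\ZZ_4$-code of length $40$ whose $A_4$-image realizes that particular $\alpha$. Since neither Lemma~\ref{lem:80} nor the bound analysis rules any of these values in or out a priori, there is no algebraic shortcut; the proof is complete once a random/systematic search via the method of~\cite{Z4-PLF} produces $16$ codes $\cD_1,\ldots,\cD_{16}$ whose minimum-norm vector counts in $A_4(\cD_i)$ cover the list $\{0,4,\ldots,56,64\}$, together with the appeal to Theorem~\ref{thm:odd} for $k=20$.
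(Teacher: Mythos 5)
Your proposal is correct and follows essentially the same route as the paper: the paper also obtains the case $\alpha=80$ (i.e.\ $k=20$) from Theorem~\ref{thm:odd}, and covers $k=0,1,\ldots,14,16$ by computationally finding $16$ extremal Type~I $\ZZ_4$-codes via the method of~\cite{Z4-PLF}, using the equivalence between extremality of the code and of $A_4(\cC)$, and identifying each theta series $\theta_{40,L,4k}$ by counting the vectors of minimum norm. The only substance in either argument beyond these reductions is the explicit computer search producing the $16$ codes, exactly as you describe.
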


As an example,
we give a generator matrix $G$ of the Type~I $\ZZ_4$-code $\cD_1$
such that $A_4(\cD_1)$ has theta series $\theta_{40,L,0}$,
which is $s$-extremal in the sense of~\cite{Ga07}.
Since $G$ is of the form
${\displaystyle
\left(\begin{array}{ccc}
I_{10} &    \multicolumn{2}{c}{A} \\
O      & 2I_{20} &2B
\end{array}\right),
}$
we only list the matrices $A$ and $2B$ in  Figure~\ref{Fig},
to save space,
where $I_k$ denotes the identity matrix of order $k$
and $O$ denotes the $20 \times 10$ zero matrix.

\begin{figure}[htb]
\centering
{\footnotesize
\[
A=
\left(\begin{array}{cccccccccc}
010110011110011111002023023120 \\
101011001101001111102200122110 \\
110101100100100111110220010211 \\
111010110010010011113022001201 \\
011101011011001001111322022322 \\
001110101111100100112112000032 \\
100111010111110010012033220223 \\
110011101011111001003003102020 \\
011001110101111100100100110002 \\
101100111000111110012012013202
\end{array}\right), \quad
2B=
\left(\begin{array}{cccccccccc}
2000002202 \\
2200000220 \\
0220000022 \\
2022000002 \\
2202200000 \\
0220220000 \\
0022022000 \\
0002202200 \\
0000220220 \\
0000022022 \\
2220020220 \\
0222002022 \\
2022200202 \\
2202220020 \\
0220222002 \\
2022022200 \\
0202202220 \\
0020220222 \\
2002022022 \\
2200202202
\end{array}\right)
\]
\caption{A generator matrix of the code $\cD_1$}
\label{Fig}
}
\end{figure}


The $16$ codes $\cD_i$ ($i=1,2,\ldots,16$)
slightly
improve the number of known extremal Type~I $\ZZ_4$-codes
of length $40$ given in Proposition~\ref{prop:Z4}, that is,
there
are at least  $16486$ inequivalent extremal Type~I $\ZZ_4$-codes
of length $40$.
Generator matrices for all codes $\cD_i$
can be obtained electronically from
``\verb+http://sci.kj.yamagata-u.ac.jp/~mharada/Paper/z4-40.txt+''.

\bigskip
\noindent {\bf Acknowledgments.}
The authors would like to thank Masaaki Kitazume and
Hiroki Shimakura for useful discussions.



\end{document}